\newcommand{\R}{\mathbb{R}}
\newcommand{\C}{\mathbb{C}}
\newcommand{\N}{\mathbb{N}}
\renewcommand{\epsilon}{\varepsilon}
\numberwithin{equation}{section}
\newtheorem{theorem}{Theorem}[section]
\newtheorem{lemma}[theorem]{Lemma}
\renewcommand{\le}{\leqslant}
\renewcommand{\ge}{\geqslant}
\begin{document}

\title[Vegetation--Rainfall--Bushfire Interactions]{Stability and
Self-Organized Patterns \\ in Coupled Ecohydrological--Fire Dynamics: \\ A Model of Vegetation--Rainfall--Bushfire Interactions}\thanks{This work
has been supported by the Australian Research Council Discovery Project {\em Non-local PDE approach to moving fronts and bushfires} DP250101080.}

\author[Serena Dipierro and Enrico Valdinoci]{
Serena Dipierro
and
Enrico Valdinoci}

\keywords{Stability analysis. Interactions between vegetation, rainfall, and bushfires.}

\maketitle

{\scriptsize \begin{center} Department of Mathematics and Statistics\\
University of Western Australia\\ 35 Stirling Highway, WA6009 Crawley (Australia)
\end{center} }
\bigskip

{\scriptsize\begin{center}
E-mail addresses:
{\tt serena.dipierro@uwa.edu.au},
{\tt enrico.valdinoci@uwa.edu.au}
\end{center}}\bigskip

\begin{abstract}
This paper investigates the conditions for the stability and emergence of patterns in
a new three-component reaction-diffusion system. The system describes the coexistence
and interaction of water reservoirs, vegetation, and bushfire activity in a given ecosystem.

We perform a detailed stability analysis to determine the parameter space where an unstable homogeneous equilibrium becomes stable with respect to spatially nonuniform perturbations.

We also use diffusion to generate traveling trains in the form of periodic orbits of the linearized system. These orbits are remnants of an unstable equilibrium in the absence of diffusion and arise from a nonsingular eigenvalue crossing of the imaginary axis, while a third eigenvalue remains real and negative, thereby ensuring linear stability for monocromatic waves.

These phenomena differ from ``classical'' Turing and Hopf bifurcations, as the model does not involve distinct ``activators'' and ``inhibitors'', and the effects observed are not the byproduct of diffusion with necessarily differing speeds. Also, differently from the classical Turing pattern, the role of diffusion in this context is to
stabilize, rather than destabilize, homogeneous equilibria.

We also consider the case of plant competition, showing a suitable form of Turing instability for
slow-frequency oscillations in a small rainfall regime.
\end{abstract}

\section{Introduction}

In this paper, we introduce a mathematical model to describe an ecosystem
of water reservoirs, flammable vegetation, and wildfire, and we
investigate
whether diffusion can
stabilize equilibria and give rise to the spontaneous formation of persistent patterns.

To describe this ecosystem,
we consider three scalar functions of interest, namely bushfire intensity~$f$,
amount of vegetation~$v$, and water availability~$w$. These functions
may vary in time~$t\in\R$ and space~$x\in\R^n$.

\begin{figure}[h]
    \centering
    \includegraphics[height=4cm]{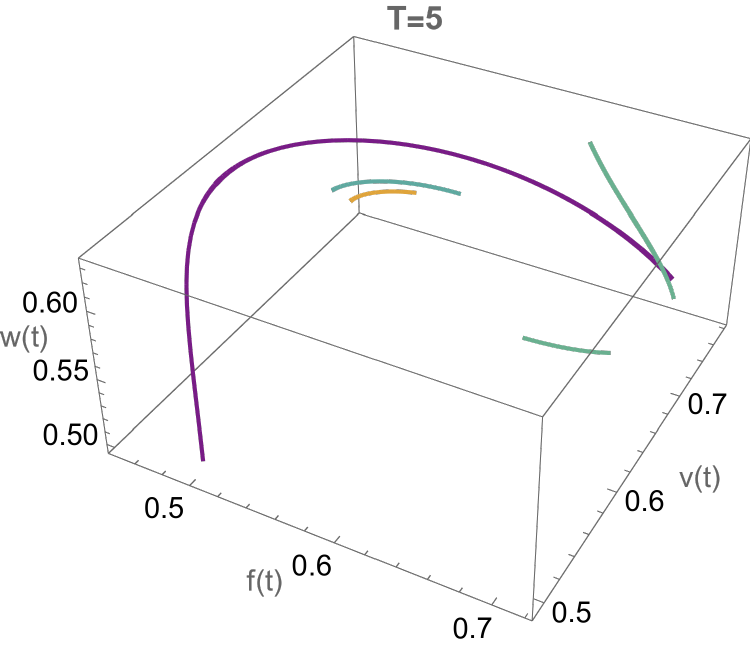}$\,$
    \includegraphics[height=4cm]{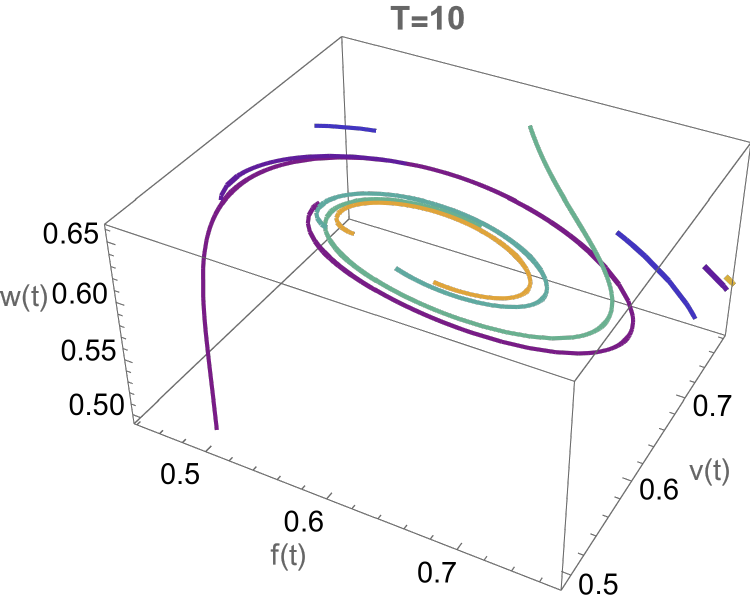}$\,$
    \includegraphics[height=4cm]{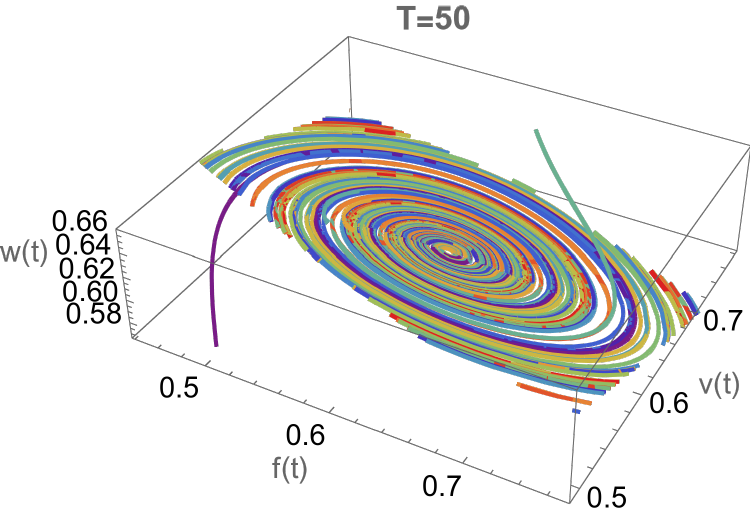}
    \caption{\footnotesize\sl Plot of 27 trajectories of~\eqref{ODE} corresponding to the parameters~$\alpha := \beta := \gamma := \delta := \epsilon := \eta := \zeta := 1$, with final time~$T\in\{5,10,50\}$.}
    \label{FIG100}
\end{figure}

We suppose that bushfires are fueled by the presence of vegetation and are
extinguished by water. Specifically, fire increases proportionally to
the current fire activity and to the availability of the vegetation (through
a proportionality factor~$\alpha\in(0,+\infty)$) 
and decreases proportionally to the current fire activity and the presence of water
in the terrain (through
a proportionality factor~$\beta\in(0,+\infty)$). We also assume that fire spread over the
land: for this, we model the fire diffusion via the Laplace operator (using a classical random
dispersal as a simple proxy for spreading; the diffusion coefficient will be denoted
by~$c\in(0,+\infty)$).

These assumptions lead to the equation
$$ \partial_t f=f(\alpha v-\beta w)+c\Delta f.$$
Notice that the product of~$fv$ (respectively, $fw$) on the right-hand side of the equation above
corresponds to a ``random encounter'' between fire and vegetation (respectively, fire and water).

Also, to model the evolution of vegetation, we suppose that it increases by the presence
of water in the terrain and decreased by fire activity (respectively, through proportionality
factors~$\zeta$ and~$\eta\in(0,+\infty)$). We also assume that the vegetation is ``static'',
namely it is not subject to diffusion. These ans\"atze can be translated into the equation
$$\partial_t v=v(\zeta w-\eta f).$$

Finally, water variations depend on rain (assumed to occur at a constant rate~$\gamma\in(0,+\infty)$), vegetation (sucking out water from the terrain with a proportionality factor~$\delta\in(0,+\infty)$), and evaporation (for this, we postulate that a proportion~$\epsilon$
of the available water evaporates in the unit of time). We also suppose that water diffuses
through the terrain (with diffusion coefficient equal to~$d\in(0,+\infty)$). These hypotheses lead to the equation
$$\partial_t w=\gamma-\delta v w-\epsilon w+d\Delta w.$$

\begin{figure}[h]
    \centering
    \includegraphics[height=4cm]{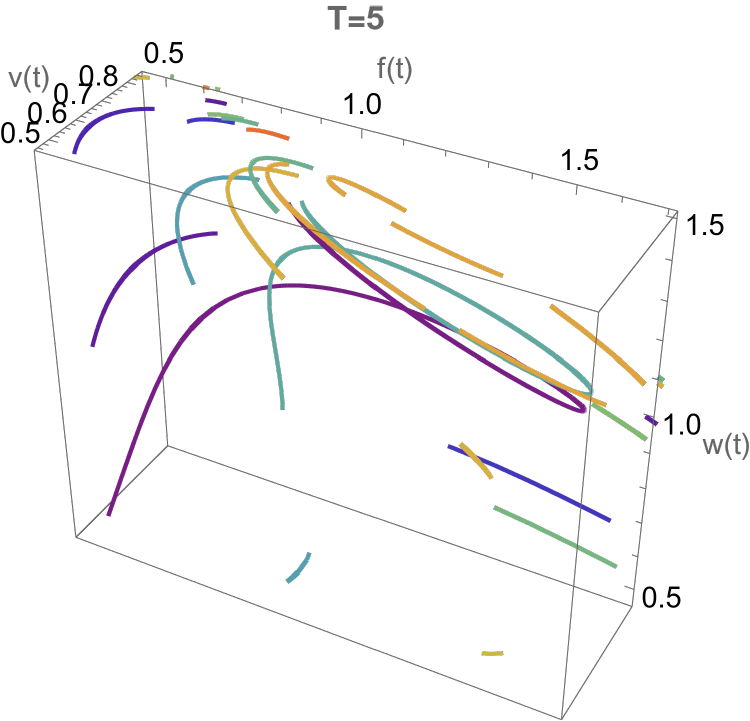}$\,$
    \includegraphics[height=4cm]{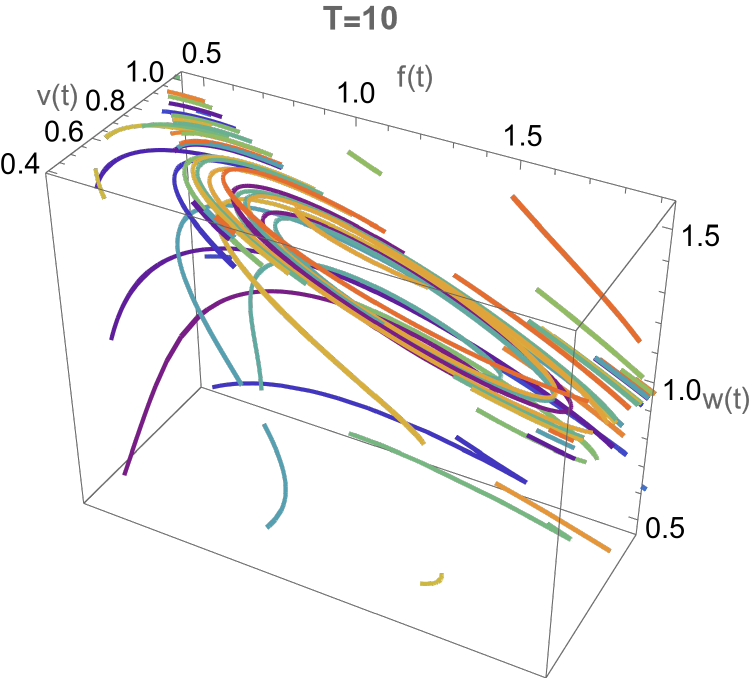}$\,$
    \includegraphics[height=4cm]{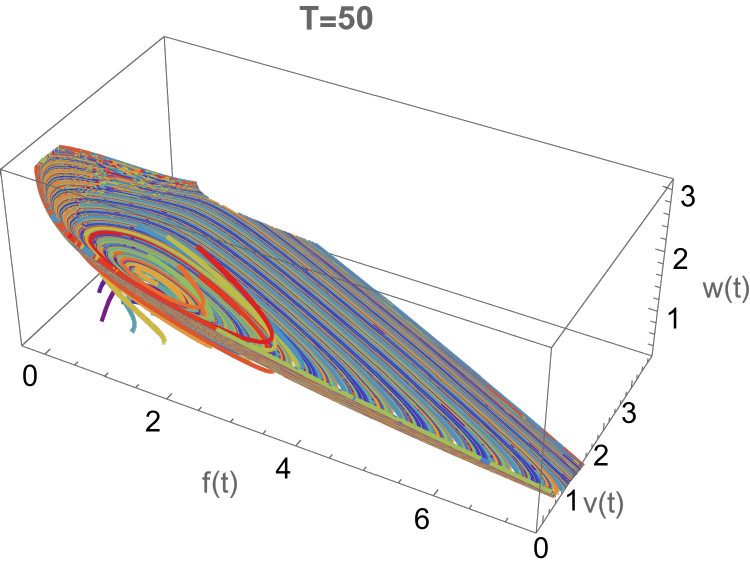}
    \caption{\footnotesize\sl Plot of 27 trajectories of~\eqref{ODE} corresponding to the parameters~$\alpha :=2$, $ \beta := \gamma := \delta :=  \eta := \zeta := 1$,  and~$\epsilon :=\frac1{10}$, with final time~$T\in\{5,10,50\}$.}
    \label{FIG101}
\end{figure}

We stress that, to keep the analysis as simple as possible, we have assumed the terrain to be flat and homogeneous (variations of fuel and slope can be taken into account, for example,
by replacing the Laplacian with an inhomogeneous diffusion operator).

The above equations can be summarized in the system
\begin{equation}\label{0dojc320yjyouj-3erfg}
\begin{dcases}
\partial_t f=f(\alpha v-\beta w)+c\Delta f, \\
\partial_t v=v(\zeta w-\eta f),\\
\partial_t w=\gamma-\delta v w-\epsilon w+d\Delta w.
\end{dcases}
\end{equation}

We mention that
it is also interesting to consider \label{ANTIP6} variations of the model in~\eqref{0dojc320yjyouj-3erfg} to account for the fact that plants compete for resources (such as water, nutrients) in a spatial neighborhood. This competition effect reduces vegetation growth where nearby biomass is high, providing a ``negative feedback'' on the variation of~$v$.
This additional competition feature will be incorporated
in the forthcoming equation~\eqref{NUOVO}: to initially maintain the analysis
as simple as possible, at this stage we focus on the simpler model
in~\eqref{0dojc320yjyouj-3erfg}.

An interesting simplification of the system of partial differential equations in~\eqref{0dojc320yjyouj-3erfg}
occurs in the absence of spatial diffusion, namely when~$c:=d:=0$. In this case, the problem can be considered as independent of the space variable~$x$ and boils down to the system of ordinary differential equations
\begin{equation}\label{ODE}
\begin{dcases}
\dot f=f(\alpha v-\beta w), \\
\dot v=v(\zeta w-\eta f),\\
\dot w=\gamma-\delta v w-\epsilon w.
\end{dcases}
\end{equation}

\begin{figure}[h]
    \centering
    \includegraphics[width=0.35\textwidth]{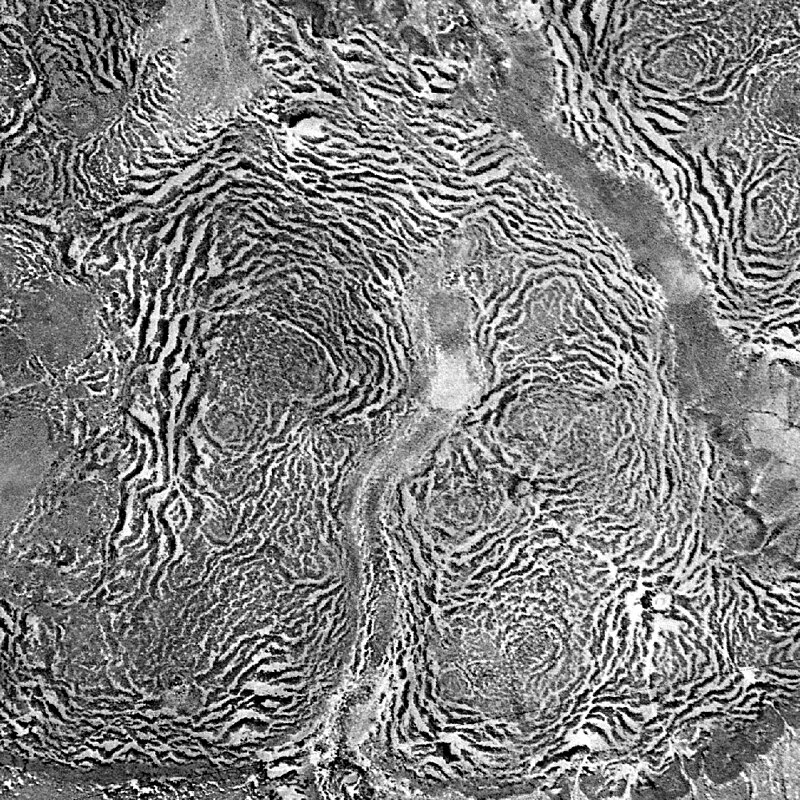}
    \caption{\footnotesize\sl A ``tiger bush'' plateau in Niger, with
    vegetation dominated by Combretum micranthum and Guiera senegalensis.
    United States Geological Survey (Public Domain).}
    \label{FIG1}
\end{figure}

We now motivate our analysis by presenting a series of concrete examples that illustrate the types of behaviors that
we seek to extract from the mathematical study of~\eqref{ODE}.
In this regard, we recall that, especially
in semi-arid ecosystems, vegetation often displays spatial self-organization, such as
stripes and spots, see Figure~\ref{FIG1}.
For further images about field observations of patterns in several types of vegetation, see~\cite[Figure 1 A--E]{HARD},
\cite[Figure~1 A--G]{RIET2},
and~\cite[Figure~1 A--H,
Figure~2 A--C,
Figure~3 A--C,
Figure~4 A--B]{BORGO}.
Examples of aerial images of water patterns are also displayed in Figure~\ref{FIGWA}.
For related patterns in different environments, see for example~\cite{WU05} for
landscapes in savanna parklands, \cite{HIEM} for krummholz patches and ribbon forests, and the references therein.

The mathematical study of self-organized vegetation patterns in semi-arid regions was also considered in~\cite{SHE}; the author deals with a system of two partial differential equations
for plant and water densities (presented in (1a) and (1b) there). This model was first introduced in~\cite{KLAUS}, 
is similar to but distinct from~\eqref{ODE} here, and does not account for fire activity.
Vegetation patches have been also analyzed in~\cite{HARD} via two partial differential equations
describing
biomass density
and ground water density (see in particular equations~(1) and~(2) there) to understand
desertification phenomena; in this setting, stability and bushfire activity
were not specifically considered, but the model predicted the coexistence
of intermediate
states between bare soil at low precipitation and homogeneous vegetation at high precipitation,
leading to formations in the form of spots, stripes, and holes.

Patterns are also known to appear in relation to bushfire activities, especially in terms of burn scars,
which are visible from satellite images relying on
visible and near-infrared light: the burn scars usually appear black, brown, or brick red and stand out in contrast to vegetation, which appears bright green, see Figure~\ref{FIG3}.

Mathematical models for plant biomass, soil water, and surface water were also considered in~\cite{RIET}.
These models were retaken in~\cite{HID}
in view of numerical simulations showing
long- range hyperuniformity of vegetation patterns (see Figures~A1--A4 there for aerial images of
vegetation patterns across the world);
in this setting, wildfires, together with droughts and overgrazing, are typically treated
as disturbances and not incorporated into the analysis.

See~\cite{SCAN, H78} for an analysis of the positive feedback between vegetation and resource accumulation, and an effort to identify the essential factors in the process in order to simplify the modeling accordingly.

See also~\cite{KEANE} for the design and
algorithms of simulations highlighting the dynamics of landscape fires and vegetation,
and~\cite{2015, 2017, 2022}
for the description of the dynamic processes that interact between wildland
fire and vegetation.
Data about the landscape mosaic of vegetation in comparison to rainfall and fire events were studied in~\cite{ETTEN},
especially in terms of time-series analysis.

Let us now dive into some technical aspect
of~\eqref{ODE}. First of all, we note that, while similar in some respects, this system of equations is structurally very different from the one for three competing species engaged in a rock-paper-scissors game that  has been used in mathematical biology, for example, to describe dynamics in lizards~\cite{SIN} and bacteria~\cite{KIR}. Indeed, a key distinction between the system of equations~\eqref{ODE} and the rock-paper-scissors game is the lack of symmetry. While rock-paper-scissors involves three interchangeable populations with symmetric, nontransitive rules, the roles of water availability~$w$, fire intensity~$f$, and vegetation~$v$ in~\eqref{ODE} are not equivalent
(not even in the absence of rain~$\gamma$ and evaporation~$\epsilon$): the asymmetry stems from the fact that fire does not\footnote{In fact, we mention that a more complex model could incorporate a decrease in water due to fire-enhanced evaporation (for example, by replacing $\epsilon$ with $\epsilon_1 + \epsilon_2 f$ in~\eqref{0dojc320yjyouj-3erfg} and~\eqref{ODE}), but we have chosen to keep our current model as simple as possible.}
increase the amount of water,
and correspondingly, the right-hand side of the last equation in~\eqref{ODE}
does not present any term proportional to~$fw$.

It is readily seen that the system of ordinary differential equations~\eqref{ODE} possesses the equilibria
\[
E_0 := \left(0, 0, \frac{\gamma}{\epsilon}\right)
\qquad{\mbox{and}}\qquad
E_1:=\left( f_\star,
v_\star , w_\star\right),\]
where\begin{equation}\label{EQSTvdARDAF23-4}
f_\star:=\frac{\zeta\,w_\star}{\eta},\qquad
v_\star:=\frac{\beta w_\star}{\alpha},\qquad{\mbox{and}}\qquad
w_\star := \frac{ \sqrt{\alpha^2\epsilon^2 + 4\alpha\beta\delta\gamma}-\alpha\epsilon}{2\beta\delta}.
\end{equation}
We note that~$w_\star>0$,
whence~$f_\star>0$ and~$v_\star>0$. In this spirit, the equilibrium~$E_0$ is ``trivial'',
since it corresponds to a case in which there is no vegetation, and correspondingly no
wildfires, and the hydric balance is the plain outcome of rain and evaporation.
Instead, the equilibrium~$E_1$ presents a ``nontrivial'' coexistence of water, vegetation, and  fire activity.

The linear stability\footnote{As customary for systems of ordinary differential equations,
we say that an equilibrium is:
\begin{itemize}\item
{\em linearly stable} if all eigenvalues 
of the linearized system have negative real parts,
\item {\em
linearly unstable}, if at least one eigenvalue has positive real part,
\item {\em neutral} if
at least one eigenvalue has null real part,
with none having positive real part.
\end{itemize}
For linearly stable equilibria,
perturbations decay, while for linearly unstable equilibria perturbations grow.
For neutral equilibria, the behavior of perturbations typically depends on higher-order terms.

On a similar note, for stable equilibria
the system is locally asymptotically stable (if one starts sufficiently close,
one stays close and converges to the equilibrium as time goes to infinity)
and for unstable equilibria
the system is locally asymptotically unstable (one can start arbitrarily close to the equilibrium and still move away). For neutral equilibria
higher-order nonlinear terms may either stabilize or destabilize the system, hence the linearization is inconclusive for
asymptotic statements.} of these equilibria is addressed by the following result:

\begin{theorem}\label{STAL}
The equilibrium~$E_0$ is unstable
(the corresponding linearized system
possesses
one real, positive eigenvalue
and two real, negative eigenvalues).

Let also
$$\Upsilon:=\frac{ 2\beta \gamma \left(\delta-\alpha\right) }{ \sqrt{\alpha^2 \epsilon^2 + 4 \alpha \beta \gamma \delta} + \alpha \epsilon} + \epsilon.$$

If~$\Upsilon>0$, the equilibrium~$E_1$ is stable (all the eigenvalues of the linearized system
have negative real parts).

If~$\Upsilon<0$, the equilibrium~$E_1$ is unstable
(the linearized system
possesses
one real, negative eigenvalue and two complex eigenvalues with positive real part).

If~$\Upsilon=0$, the equilibrium~$E_1$ is neutral (the linearized system
possesses
one real, negative eigenvalue
and two purely imaginary eigenvalues).
\end{theorem}

We observe that all the cases in Theorem~\ref{STAL} are possible, i.e.~$\Upsilon$
changes sign in dependence of different choices of the structural parameters,
since
$$\lim_{\alpha\to+\infty}\frac{ 2\beta \gamma \left(\delta-\alpha\right) }{ \sqrt{\alpha^2 \epsilon^2 + 4 \alpha \beta \gamma \delta} + \alpha \epsilon}=-\frac{\beta\gamma}\epsilon$$
and
$$\lim_{\alpha\to0}\frac{ 2\beta \gamma \left(\delta-\alpha\right) }{ \sqrt{\alpha^2 \epsilon^2 + 4 \alpha \beta \gamma \delta} + \alpha \epsilon}=+\infty.$$

The change of stability of the equilibrium~$E_1$ can be visualized with the aid
of Figures~\ref{FIG100} and~\ref{FIG101}. Indeed, Figure~\ref{FIG100} sketches a case in which~$\Upsilon>0$ and stability can be observed since trajectories spiral inward towards the
equilibrium. Instead, Figure~\ref{FIG101} presents a case in which~$\Upsilon<0$ and instability
is showcased by the outward direction of the spirals, which is confirmed by the enlargement of the corresponding frame as the termination time increases.

{F}rom the ecological point of view it is interesting to observe that the instability
of the equilibrium~$E_1$, corresponding to the case~$\Upsilon<0$, is triggered by large values of~$\alpha$, 
which in turn represents the fire's propensity to ignite: this confirms
the experience that bushfire managing is more critical
in the presence of highly flammable fuel.

\begin{figure}[h]
    \centering
    \includegraphics[width=0.35\textwidth]{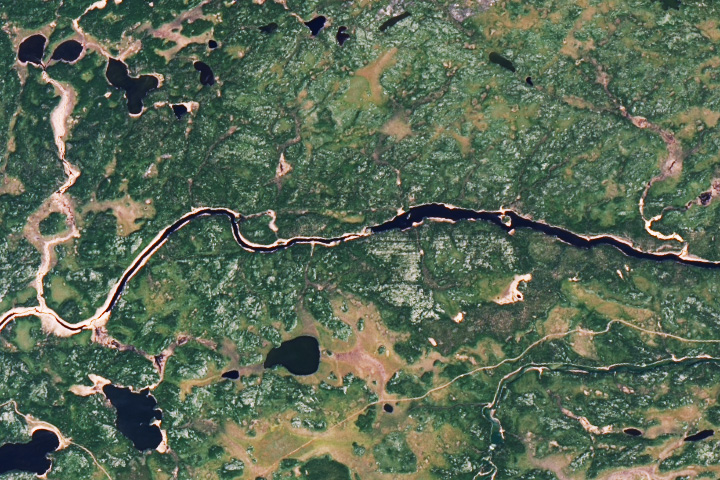}$\,$
    \includegraphics[width=0.35\textwidth]{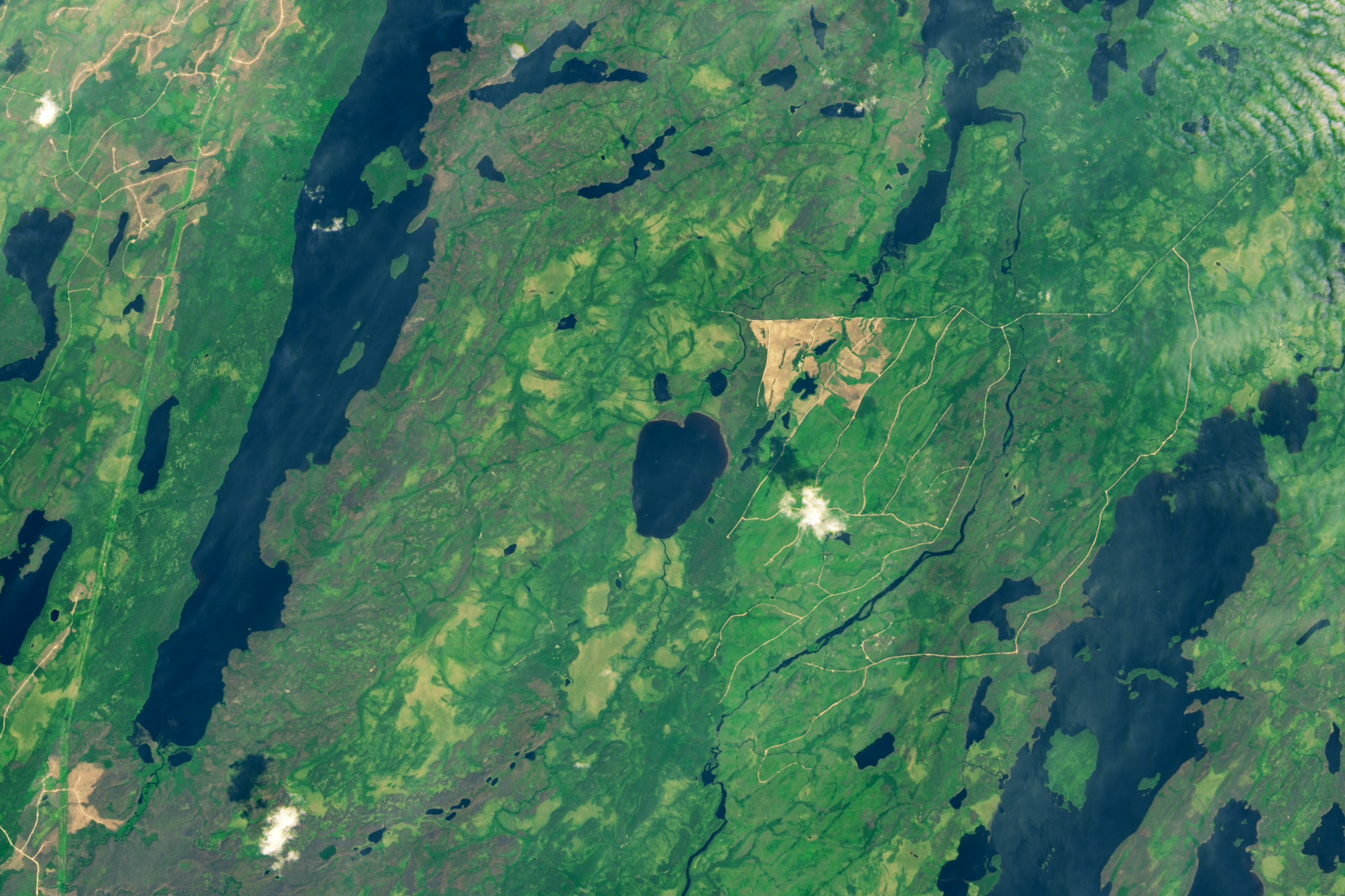}
    \caption{\footnotesize\sl Central Manitoba and Quebec, Canada: images captured by
the Operational Land Imager-2 (OLI-2 ) on NASA's satellite Landsat 9 (May 23, 2025, and
July 14, 2025) (Public Domain).}
    \label{FIGWA}
\end{figure}

We now return to the analysis of the system of partial differential
equations~\eqref{0dojc320yjyouj-3erfg}, with the aim of detecting both
the formation of patterns created by diffusion and the stabilization of the nontrivial
equilibrium~$E_1$ (for all values of the structural parameters)
thanks to high-frequency diffusive patterns.

With this objective in mind, we consider small amplitude perturbations of~$E_1$
corresponding to a single excited mode. Namely, given~$k\in(0,+\infty)^n$ and a small~$\rho>0$, we look for approximate
solutions of~\eqref{0dojc320yjyouj-3erfg} of the form
\begin{equation}\label{0dojc320yjyouj-3erfg2} \begin{dcases} f(x,t)=f_\star+\rho\Big(f_{1,k}(t)\,\sin(k\cdot x)+f_{2,k}(t)\,\cos(k\cdot x)\Big),\\
v(x,t)=v_\star+\rho\Big(v_{1,k}(t)\,\sin(k\cdot x)+v_{2,k}(t)\,\cos(k\cdot x)\Big),\\
w(x,t)=w_\star+\rho\Big(w_{1,k}(t)\,\sin(k\cdot x)+w_{2,k}(t)\,\cos(k\cdot x)\Big),
\end{dcases}
\end{equation}
where the functions~$f_{1,k}$, $f_{2,k}$, $v_{1,k}$, $v_{2,k}$,
$w_{1,k}$, and~$w_{2,k}$ are to be determined
so to obtain a complete solution of the linearized system.

In this setting, modes with high spatial frequency lead to stability
of the homogeneous equilibrium, according to the following result:

\begin{theorem}\label{TUT}
There exists~$k_0>0$ such that the equilibrium~$E_1$
becomes stable for~\eqref{0dojc320yjyouj-3erfg2} when~$|k|\ge k_0$.
\end{theorem}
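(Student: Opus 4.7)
The plan is to reduce the stability of the ansatz~\eqref{0dojc320yjyouj-3erfg2} to a spectral question for a single~$3\times 3$ matrix depending on~$|k|$, and then invoke the Routh--Hurwitz criterion with~$|k|$ as a large parameter. First, I would linearize~\eqref{0dojc320yjyouj-3erfg} at~$E_1$: the defining identities of~$E_1$ yield~$\alpha v_\star-\beta w_\star=0$ and~$\zeta w_\star-\eta f_\star=0$, so the reaction Jacobian at~$E_1$ simplifies to
$$J=\begin{pmatrix} 0 & \alpha f_\star & -\beta f_\star \\ -\eta v_\star & 0 & \zeta v_\star \\ 0 & -\delta w_\star & -\delta v_\star-\epsilon\end{pmatrix}.$$
Plugging~\eqref{0dojc320yjyouj-3erfg2} into the linearized PDE and using~$\Delta\sin(k\cdot x)=-|k|^2\sin(k\cdot x)$ together with its cosine analogue, the sine-coefficient triple and the cosine-coefficient triple decouple into two copies of the same linear ODE~$\dot{\mathbf{u}}=M_k\mathbf{u}$, with
$$M_k:=J-|k|^2\,\mathrm{diag}(c,0,d).$$
Linear stability of~$E_1$ for~\eqref{0dojc320yjyouj-3erfg2} is therefore equivalent to all eigenvalues of~$M_k$ having negative real part.

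Setting~$K:=|k|^2$ and writing the characteristic polynomial of~$M_k$ as~$\lambda^3+a_1\lambda^2+a_2\lambda+a_3$, a direct expansion (the trace, the sum of principal~$2\times 2$ minors, and the determinant of~$M_k$) produces
\begin{align*}
a_1 &= (c+d)\,K+\delta v_\star+\epsilon,\\
a_2 &= cd\,K^2+c(\delta v_\star+\epsilon)\,K+\alpha\eta f_\star v_\star+\zeta\delta v_\star w_\star,\\
a_3 &= (\alpha\eta d\,f_\star v_\star+c\zeta\delta v_\star w_\star)\,K+\alpha\eta f_\star v_\star(\delta v_\star+\epsilon)+\beta\eta\delta f_\star v_\star w_\star.
\end{align*}
Positivity of the structural parameters and of each coordinate of~$E_1$ makes every coefficient~$a_i$ strictly positive for all~$K\ge 0$. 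By the Routh--Hurwitz criterion for cubics, it then remains only to verify the inequality~$a_1 a_2>a_3$ for~$|k|$ sufficiently large.

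This last step is where the hypothesis~$|k|\ge k_0$ becomes essential: a degree count gives
$$a_1 a_2=cd(c+d)\,K^3+O(K^2)\qquad\text{and}\qquad a_3=O(K),$$
so~$a_1 a_2-a_3\to+\infty$ as~$K\to+\infty$, and any~$k_0>0$ beyond which this difference is positive will do. The main obstacle in this plan is the algebraic bookkeeping for~$a_1$, $a_2$, $a_3$: one has to track the six nonzero entries of~$J$ and their interaction with the two diffusive corrections inside the~$3\times 3$ determinant expansion. Once these expressions are available, the pointwise positivity of the~$a_i$ and the cubic-versus-linear asymptotic comparison follow mechanically, and Routh--Hurwitz closes the argument, noting incidentally that the stabilizing~$K^3$ term in~$a_1 a_2$ requires~$c,d>0$, i.e.~diffusion in both~$f$ and~$w$.
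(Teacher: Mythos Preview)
Your proposal is correct and follows essentially the same route as the paper: linearize at~$E_1$, reduce the sine/cosine modes to the single linear system~$\dot X=(J-|k|^2\mathrm{diag}(c,0,d))X$, compute the cubic characteristic polynomial, and apply the Routh--Hurwitz criterion for cubics (the paper's Lemma~\ref{LE:A}) together with the observation that~$a_1a_2-a_3$ grows like~$cd(c+d)|k|^6$. The only cosmetic differences are your coefficient indexing and that the paper also records the subleading terms~$b_2,b_1$ of~$a_1a_2-a_3$ explicitly, whereas you rely on the degree count alone---both are sufficient.
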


\begin{figure}[h]
    \centering
    \includegraphics[height=3.1cm]{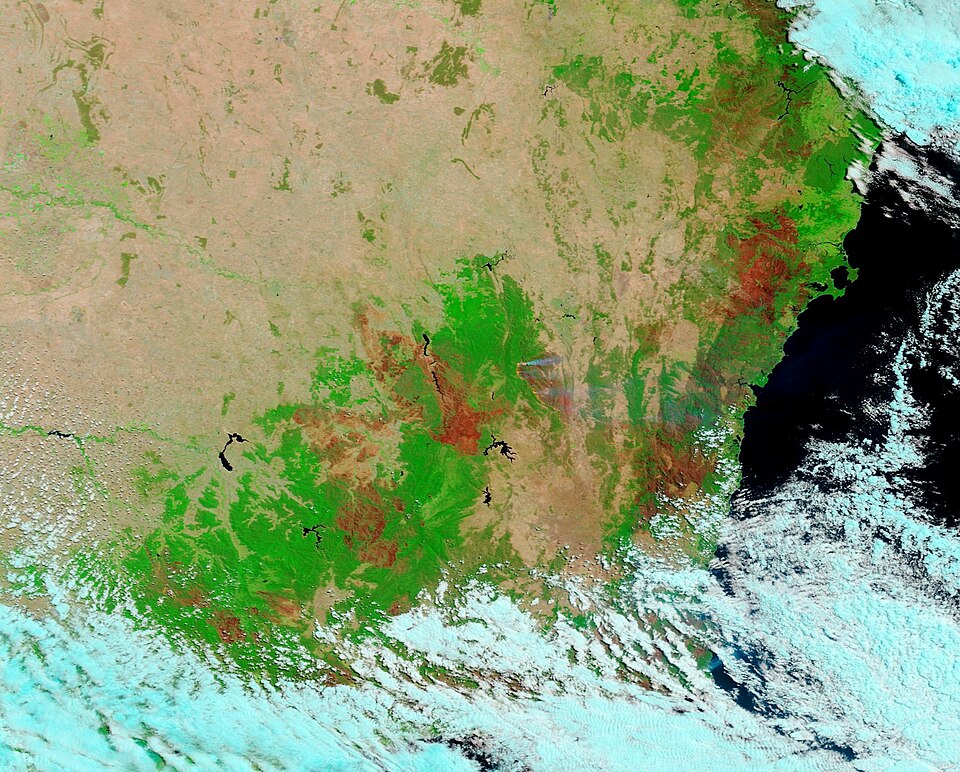}$\,$
    \includegraphics[height=3.1cm]{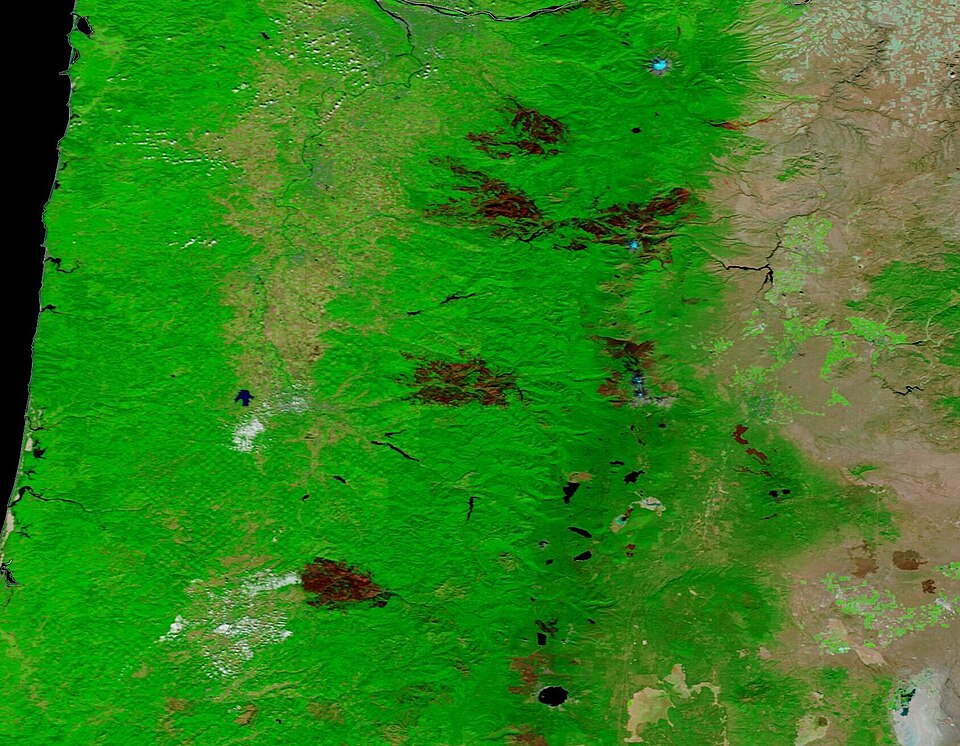}$\,$
    \includegraphics[height=3.1cm]{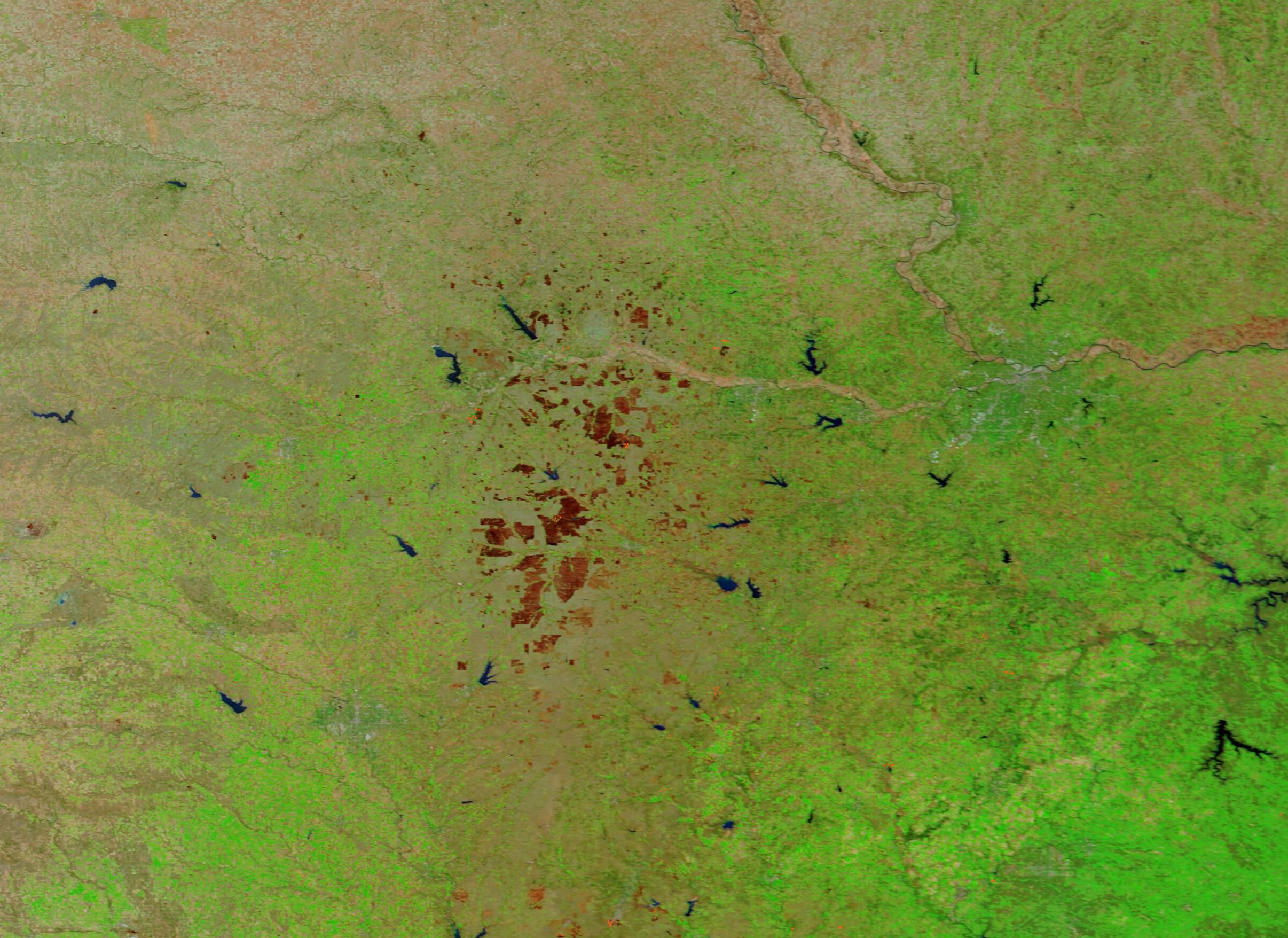}$\,$
    \includegraphics[height=3.1cm]{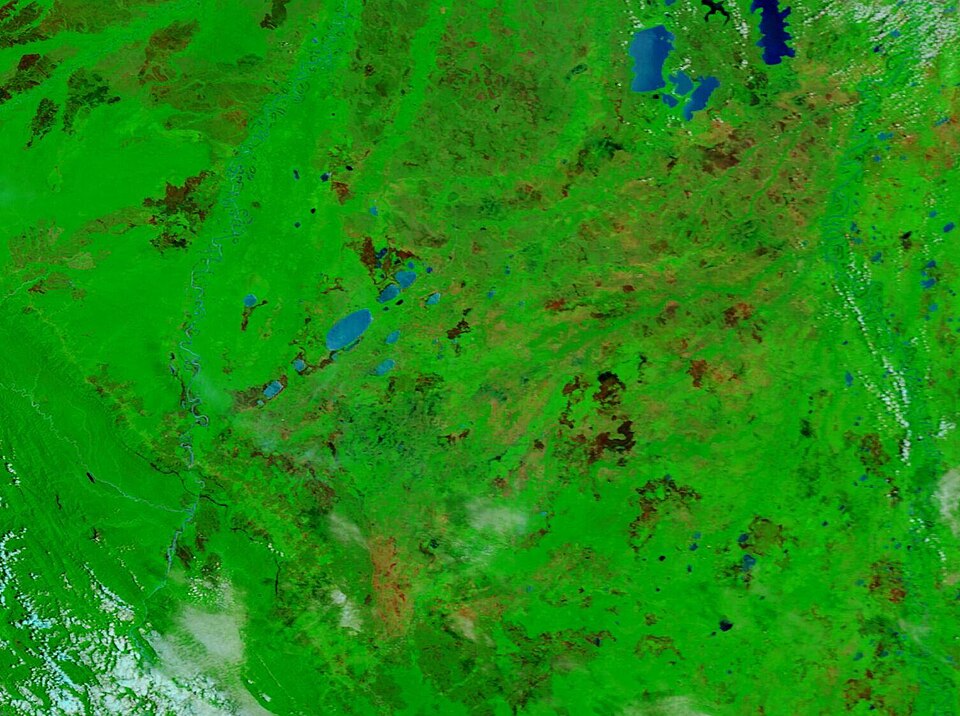}
    \caption{\footnotesize\sl False-color images acquired by the Moderate Resolution Imaging Spectroradiometer (MODIS) on board NASA's Terra satellite:
    burn scars in Australia (February 3, 2020),
    Oregon (September 27, 2020), 
    Kansas (April 12, 2023), and
    Bolivia (December 9, 2023) (Public Domain).}
    \label{FIG3}
\end{figure}

More specifically, Theorem~\ref{TUT} detects a change in the stability of the equilibrium~$E_1$ (which, according to Theorem~\ref{STAL}, may become unstable when~$\Upsilon \le 0$). In the case of a sufficiently high frequency, this equilibrium is driven toward a stable configuration (corresponding to the case~$\Upsilon > 0$ in Theorem~\ref{STAL}).
We stress that this does not provide a complete global analysis of all solutions to~\eqref{0dojc320yjyouj-3erfg}, since the superposition of different frequencies (and possibly infinitely many) is not covered by Theorem~\ref{TUT}. A full characterization of the asymptotic properties of such solutions in greater generality would be an interesting direction for future research.

We observe that the result showcased by Theorem~\ref{TUT} exhibits several structural differences compared to ``classical'' cases of Turing patterns. Indeed, the traditional literature\footnote{We also recall that
in the classical Turing instability only a finite number
of eigenmodes are unstable, because diffusion strongly damps high-frequency modes.
The situation described in Theorem~\ref{TUT} is instead structurally different.}
about Turing structures primarily focuses on reaction-diffusion systems involving two components: a self-sustaining activator and an inhibitor. In contrast, in our case, none of the quantities~$f$, $v$, or~$w$ is self-sustaining, as the linear terms in~\eqref{ODE} are either negative or sign-changing. Moreover, classical models typically assume a separation of diffusion scales, with a change of stability arising from the faster diffusion of the inhibitor catching up with the short-term self-enhancement of the activator. In our setting, however, the change of stability occurs independently of the ordering of the diffusion coefficients, and, in fact, it persists even when the diffusion rates of~$f$ and~$w$ are equal.

In this spirit, the model introduced here
is not just considering vegetation as an activator (since it grows locally and promotes more growth nearby)
with fire acting as an inhibitor (spreading over longer distances and suppressing vegetation): instead our model,
which is new even in the absence of diffusion, aims at capturing a more complex dynamic interplay
between wildfire activity, vegetation, and rainfall, with the advantage of allowing broader interactions
in the ecosystem and of posing no restrictions among different kind of diffusions (this is useful in practice,
because, for example, one can consider both the cases of fast
fire invasion taking place in bushfire events and
the slow wildfire diffusion induced by prescribed burning).

In addition, the classical Turing analysis aims at detecting
instability as an outcome of diffusion: instead,
in Theorem~\ref{TUT}, a suitable kind of diffusion
(namely the one related to small oscillations with high frequency)
is proven to ensure the stability of the system, even in situations
when all the homogeneous equilibria were unstable. 

The next result shows that a bifurcation from instability also takes place as an outcome of diffusion, namely, when~$\Upsilon<0$ and for a suitable choice of the diffusion coefficients, the equilibrium point~$E_1$ loses instability and a periodic cycle appears as a solution of the linearized equation, with a stable dynamics
of monocromatic waves. In a nutshell, given~$k\in(0,+\infty)^n$ and a small~$\rho>0$, we look for approximate
solutions of~\eqref{0dojc320yjyouj-3erfg} of the form
\begin{equation}\label{odjfcvfobi0efg-ortnghoP} \begin{dcases} f(x,t)=f_\star+\rho F ( k\cdot x+\sigma t),\\
v(x,t)=v_\star+\rho V ( k\cdot x+\sigma t),\\
w(x,t)=w_\star+\rho W ( k\cdot x+\sigma t),
\end{dcases}
\end{equation}
This setting corresponds to
``wave trains'', namely periodic waves whose front propagates with constant speed
and our objective is to determine~$k\in(0,+\infty)^n$, $\sigma\in(0,+\infty)$,
and nontrivial periodic functions $F$,
$V$, $W:\R\to\R$
so to obtain a complete solution of the linearized system.

\begin{theorem}\label{ojdfc30-t7ujsbdnfcvb}
Assume that~$\Upsilon<0$. 
Then, there exist~$k\in(0,+\infty)^n$ and~$\sigma\in(0,+\infty)$
for which~\eqref{odjfcvfobi0efg-ortnghoP} solves the linearized system for all~$\rho>0$.

More precisely, the functions~$F$, $V$, and~$W$ in~\eqref{odjfcvfobi0efg-ortnghoP} can be taken of the form
of ``monocromatic'' waves 
$$F(y)=\Re\big(F_0 e^{iy}\big),\qquad
V(y)=\Re\big(V_0 e^{iy}\big),\qquad W(y)=\Re\big(W_0 e^{iy}\big),$$
for a suitable~$(F_0, V_0, W_0)\in\C^3\setminus\{0\}$, and thus the corresponding periodic orbit lies in the linear space spanned by
the two vectors
$$ \big( \Re F_0,\Re V_0,\Re W_0\big)\qquad{\mbox{and}}\qquad
 \big( \Im F_0,\Im V_0,\Im W_0\big).$$

Furthermore, these solutions are exponentially attractive
for monocromatic waves, in the sense that solutions of the form~$Y(x,t):=\theta(t)\,e^{ik\cdot x}$, with~$\theta:\R\to\C^3$,
converge to superpositions of pure traveling trains~$e^{i(k\cdot x+\sigma t)}$ and~$e^{-i(\sigma t-k\cdot x)}$
exponentially fast in time.
\end{theorem}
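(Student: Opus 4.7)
The plan is to linearize~\eqref{0dojc320yjyouj-3erfg} around~$E_1$, to substitute into the linearized system the plane-wave ansatz
$$ (f,v,w)=(f_\star,v_\star,w_\star)+\rho\,\Re\Big(\big(F_0,V_0,W_0\big)\,e^{i(k\cdot x+\sigma t)}\Big),$$
and thereby reduce the problem to an eigenvalue equation for the~$3\times 3$ matrix
\begin{equation*} J_k:=\begin{pmatrix}-c|k|^2 & \alpha f_\star & -\beta f_\star\\ -\eta v_\star & 0 & \zeta v_\star\\ 0 & -\delta w_\star & -\delta v_\star-\epsilon-d|k|^2\end{pmatrix}, \end{equation*}
where the vanishing diagonal entries in the first two rows come from the identities~$\alpha v_\star=\beta w_\star$ and~$\eta f_\star=\zeta w_\star$ satisfied at~$E_1$. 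A monocromatic wave train of the form~\eqref{odjfcvfobi0efg-ortnghoP} with~$F(y)=\Re(F_0 e^{iy})$, etc., then corresponds exactly to~$(F_0,V_0,W_0)\in\C^3\setminus\{0\}$ being an~$i\sigma$-eigenvector of~$J_k$ for some~$\sigma>0$. The main task is therefore to find~$k\in(0,+\infty)^n$ for which~$J_k$ carries a pair of purely imaginary eigenvalues, with the third one being automatically real and negative.

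Setting~$K:=|k|^2$, the characteristic polynomial of~$J_k$ has the form~$p_K(\lambda)=\lambda^3+a_2(K)\lambda^2+a_1(K)\lambda+a_0(K)$, with coefficients that are explicit polynomials in~$K$ (of degrees~$1$, $2$ and~$1$ respectively) taking strictly positive values for every~$K\ge 0$. Consequently, by Routh-Hurwitz, $p_K$ factors as~$(\lambda+a_2(K))(\lambda^2+a_1(K))$ if and only if the scalar quantity~$G(K):=a_1(K)a_2(K)-a_0(K)$ vanishes, in which case~$J_k$ has spectrum~$\{\pm i\sqrt{a_1(K)},-a_2(K)\}$. Using~$\eta f_\star=\zeta w_\star$, $\beta w_\star=\alpha v_\star$, and the algebraic simplification
$$ \frac{2\beta\gamma(\delta-\alpha)}{\sqrt{\alpha^2\epsilon^2+4\alpha\beta\gamma\delta}+\alpha\epsilon}=(\delta-\alpha)\,v_\star, $$
a direct computation yields~$G(0)=\zeta\delta v_\star w_\star\,\Upsilon$, which is strictly negative under the standing hypothesis~$\Upsilon<0$. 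On the other hand, the leading term of~$a_1(K)a_2(K)$ for large~$K$ is of order~$cd(c+d)K^3$, while~$a_0(K)$ grows only linearly in~$K$, so~$G(K)\to+\infty$ as~$K\to+\infty$. The intermediate value theorem then provides some~$K_\star>0$ with~$G(K_\star)=0$; any~$k\in(0,+\infty)^n$ with~$|k|^2=K_\star$, together with~$\sigma:=\sqrt{a_1(K_\star)}>0$ and an associated~$i\sigma$-eigenvector~$(F_0,V_0,W_0)$ of~$J_k$, delivers the desired wave-train solution.

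The periodic orbit so produced spans a genuine plane because~$(\Re F_0,\Re V_0,\Re W_0)$ and~$(\Im F_0,\Im V_0,\Im W_0)$ must be linearly independent: otherwise~$(F_0,V_0,W_0)$ would be proportional to a real vector, contradicting the non-reality of the eigenvalue~$i\sigma$. The exponential attraction statement is then a free consequence of the spectral decomposition of~$J_k$: writing~$Y(x,t)=\theta(t)e^{ik\cdot x}$ gives~$\dot\theta=J_k\theta$, whence~$\theta(t)=c_+e^{i\sigma t}u_++c_-e^{-i\sigma t}u_-+c_0 e^{-a_2(K_\star)t}u_0$, and the third summand decays exponentially at rate~$a_2(K_\star)>0$, leaving exactly a superposition of~$e^{i(k\cdot x\pm\sigma t)}$. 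I expect the main obstacle to be the algebraic verification that~$G(0)$ is a positive multiple of~$\Upsilon$, since this is precisely the step that matches the diffusive Hopf-type threshold~$G(K)=0$ with the homogeneous instability threshold~$\Upsilon=0$ of Theorem~\ref{STAL}, and so allows the hypothesis~$\Upsilon<0$ to be leveraged into the existence of the required wavenumber.
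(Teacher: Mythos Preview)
Your proposal is correct and follows essentially the same route as the paper: reduce the plane-wave ansatz to the eigenvalue problem~$J_k X=i\sigma X$ for the matrix you write (the paper's matrix~$A$ with~$\mu=|k|^2$), observe that a purely imaginary eigenvalue exists precisely when~$G(K)=a_1(K)a_2(K)-a_0(K)$ vanishes, verify~$G(0)=\delta\zeta v_\star w_\star\Upsilon<0$ and~$G(K)\to+\infty$, and invoke the intermediate value theorem to produce~$K_\star$; the attraction part is the same spectral decomposition of~$\dot\theta=J_k\theta$ with the~$e^{-a_2(K_\star)t}$ component decaying. The only cosmetic difference is that the paper packages the ``$a_1a_2=a_0$ with~$a_1>0$ gives roots~$\{-a_2,\pm i\sqrt{a_1}\}$'' fact as a separate lemma rather than calling it Routh--Hurwitz, and does not spell out (as you nicely do) the linear independence of the real and imaginary parts of the eigenvector.
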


We stress that 
Theorem~\ref{ojdfc30-t7ujsbdnfcvb} focuses on a linearized system. It would be desirable
to further investigate the nonlinear system in its full generality.

We also point out that Theorem~\ref{ojdfc30-t7ujsbdnfcvb} detects a phenomenon distinct from the classical Hopf instability, since the periodic orbits identified do not arise from a ``collision'' of eigenvalues. Instead, they result from a nonsingular crossing of two eigenvalues through the imaginary axis, while the third eigenvalue remains real and negative.
See Figure~\ref{FIG1qwefgb00} for a sketch of the eigenvalue evolution in terms of the diffusion coefficients (notice also that, even in this case, it is not necessary to assume that the diffusion speeds of firelines and water pools are different or ordered).

We also recall that
traveling wave patterns have been observed in nature in multiple contexts involving vegetation and fire.
See e.g.~\cite{Sprugel1976}
for the analysis of
cyclical band-shaped patterns 
of forest regeneration in balsam fir 
that migrate over time (this phenomenon is
called the ``fir wave'' in jargon).
See also~\cite{GANIE21, BANA23}
for models related to 
vegetation patterns in semi-arid ecosystems and
tree-grass interactions in fire-prone savannas, as well as~\cite{CART8905t2} for 
models describing interactions between vegetation and water 
including autotoxicity.
In concrete situations, the movement of traveling pulses can also be favored by sloped terrains, see e.g.~\cite{CAR89u0g36}.

As a final remark, let us point out that in the formulation of our model water, vegetation, and fire activity can be present simultaneously within the same spatial unit. This co-occurrence should be interpreted in an aggregated sense, reflecting temporal averaging over the model’s integration period rather than instantaneous coexistence at a single moment in time. This approach captures long-term tendencies and interactions among these components, but instantaneous physical incompatibilities (e.g., active fire and high water cover at the same exact location and time) are smoothed by the temporal averaging inherent in the model structure.

However, the presence of strong interactions between the different components of the model
formally corresponds to~$\alpha:=\beta:=\delta:=\eta:=\zeta:=+\infty$ in~\eqref{ODE},
reducing the model to
\begin{equation}\label{ODEREDS}
\begin{dcases}
f( v- w)=0, \\
v( w- f)=0,\\
v w=0.
\end{dcases}
\end{equation}
It is readily seen that the solutions of~\eqref{ODEREDS} prescribe at least two of the three components to vanish, namely, they are of the form~$(f,0,0)$, $(0,v,0)$, or~$(0,0,w)$, which represent ``segregated'' states in which only one of fire, vegetation, or water occurs at a given spatial location. In this sense, our model recovers such strict component separation as a limiting case.

As anticipated on page~\pageref{ANTIP6}, we now consider
a variation of the model in~\eqref{0dojc320yjyouj-3erfg} to account for competition among plants. A well-established formalization of this effect is implemented via kernel-based vegetation models, in which a kernel function describes how mutual influence decays with distance, see~\cite[equation~(25)]{LEFD} and the references therein.

In this spirit, one can model this nonlocal competition mechanisms at a spatial point~$x$ as a negative correction to vegetation growth of the form
\begin{equation}\label{PROVV2} \int_{\R^n} K(y)\,v(x+y)\,dy\end{equation}
and thus the evolution equation of the vegetation at a point~$x$ becomes
\begin{equation*}\partial_t v=v(\zeta w-\eta f)-v(x)\int_{\R^n} K(y)\,v(x+y)\,dy.\end{equation*}
The nonnegative kernel function~$K$ can be taken to be radial, to quantify the strength of competition at a given spatial lag.
For short-range inhibition, $K$ is sharply peaked near the origin and decays quickly away.
In this setting, one can approximate the kernel interaction in~\eqref{PROVV2}
by a higher-order differential operator of the form\footnote{As usual, we are using here the notation
of iterated Laplacian
$$\Delta^j:=\underbrace{\Delta\circ\cdots\circ\Delta}_{\text{\tiny{$j$ times}}}.$$}
\begin{equation}\label{NUOVO7}
\ell_0 v+\ell_1\,{\Delta} v+\ell_2\,\Delta^2 v+\ell_3\,{\Delta}^3 v+\dots,\end{equation}
for some~$\ell_j\in(0,+\infty)$, see Appendix~\ref{0apjclrgh}.

In the interest of simplicity, in this paper we will keep only the term related to the Laplacian operator (e.g., with the ansatz that the first term is compensated by short-range pollination and the higher-order terms are somewhat negligible). Namely, we replace~\eqref{NUOVO7} by the only term~$\ell{\Delta} v$, for some~$\ell\in(0,+\infty)$.

Accordingly, the evolution equation of the vegetation becomes\footnote{We point out that terms of the form~$v\Delta v$,
or of the form~$\Delta (v^2)$, are related to self-diffusion 
and porous medium equations, 
and can also lead to
pattern formation in other contexts, see~\cite{LOUSELF}.}
$$ \partial_t v=v(\zeta w-\eta f)-\ell v{\Delta} v$$
and correspondingly one replaces~\eqref{0dojc320yjyouj-3erfg} by
\begin{equation}\label{NUOVO}
\begin{dcases}
\partial_t f=f(\alpha v-\beta w)+c\Delta f, \\
\partial_t v=v(\zeta w-\eta f)-\ell v{\Delta} v,\\
\partial_t w=\gamma-\delta v w-\epsilon w+d\Delta w.
\end{dcases}
\end{equation}

The vegetation competition term promotes further pattern formation, which stems from an
induced instability of the homogeneous equilibrium.
This holds for short-frequency oscillations, corresponding to small~$k$ in~\eqref{0dojc320yjyouj-3erfg2},
and small rainfall amount~$\gamma$, as shown by the following result:

\begin{theorem}\label{TR5r3ee}
Let~$\varsigma\in(0,\epsilon)$.
There exist~$k_0$, $\gamma_0\in(0,+\infty)$ such that
the equilibrium~$E_1$ for~\eqref{NUOVO}
becomes unstable for~\eqref{0dojc320yjyouj-3erfg2} 
when~$k\in(0,k_0)$, $\gamma\in(0,\gamma_0)$, and~$\ell:=\frac{\varsigma}{|k|^2 v_\star}$.
\end{theorem}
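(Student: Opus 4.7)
The plan is to insert the ansatz~\eqref{0dojc320yjyouj-3erfg2} into the linearization of~\eqref{NUOVO} around~$E_1$, isolate the $3\times 3$ matrix~$M(k,\gamma)$ whose spectrum drives the stability of the mode, and exhibit a simple sign obstruction forcing one eigenvalue to cross into the right half-plane when both~$|k|$ and~$\gamma$ are small. Writing~$f=f_\star+\tilde f$, $v=v_\star+\tilde v$, $w=w_\star+\tilde w$, and exploiting the identities~$\alpha v_\star=\beta w_\star$, $\zeta w_\star=\eta f_\star$, and~$\gamma=(\delta v_\star+\epsilon)w_\star$ valid at the equilibrium, the linearization is routine; the only non-routine point is the competition term, for which~$v\Delta v=(v_\star+\tilde v)\Delta\tilde v$ contributes~$v_\star\Delta\tilde v$ at linear order. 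Plugging in~\eqref{0dojc320yjyouj-3erfg2} and using~$\Delta\sin(k\cdot x)=-|k|^2\sin(k\cdot x)$ (and similarly for cosine), the sine and cosine coefficients decouple into two identical ODE systems~$\dot Y=MY$ with
$$M=\begin{pmatrix} -c|k|^2 & \alpha f_\star & -\beta f_\star \\ -\eta v_\star & \varsigma & \zeta v_\star \\ 0 & -\delta w_\star & -(\delta v_\star+\epsilon+d|k|^2)\end{pmatrix},$$
because the prescribed choice~$\ell=\varsigma/(|k|^2v_\star)$ turns the competition correction~$-\ell v_\star\Delta\tilde v$ into the diagonal entry~$+\ell v_\star|k|^2=+\varsigma$.

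I would next rely on the following elementary observation, which replaces the full Routh--Hurwitz inequality by a single convenient test. If the characteristic polynomial of a real~$3\times 3$ matrix is~$\lambda^3+a_2\lambda^2+a_1\lambda+a_0$ and all its roots have nonpositive real part, then~$a_1=\lambda_1\lambda_2+\lambda_1\lambda_3+\lambda_2\lambda_3\ge 0$: this is immediate for three real nonpositive roots, and for a conjugate pair~$\mu,\bar\mu$ together with a real root~$\lambda_3\le 0$ one simply notes~$a_1=|\mu|^2+2(\Re\mu)\lambda_3\ge 0$. Therefore, producing a parameter regime in which~$a_1<0$ yields at least one eigenvalue with strictly positive real part, and the corresponding eigenmode provides an exponentially growing solution of the form~\eqref{0dojc320yjyouj-3erfg2}, which is the desired instability.

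Finally, computing~$a_1$ as the sum of the three~$2\times 2$ principal minors of~$M$ gives
$$a_1=-\varsigma(\delta v_\star+\epsilon+d|k|^2)+\zeta\delta v_\star w_\star+c|k|^2(\delta v_\star+\epsilon+d|k|^2)-c|k|^2\varsigma+\alpha\eta f_\star v_\star.$$
Since~$w_\star\to 0$ as~$\gamma\to 0^+$ (by the explicit formula preceding Theorem~\ref{STAL}), and thus~$f_\star,v_\star\to 0$ as well, the right-hand side depends continuously on~$(|k|,\gamma)$ up to the boundary and satisfies
$$\lim_{(|k|,\gamma)\to(0^+,0^+)}a_1\;=\;-\varsigma\epsilon\;<\;0.$$
A continuity argument then yields~$k_0,\gamma_0>0$ such that~$a_1<0$ for all~$|k|\in(0,k_0)$ and~$\gamma\in(0,\gamma_0)$, proving the instability claimed in the theorem. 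The substantive point is purely the bookkeeping that makes the rescaling~$\ell=\varsigma/(|k|^2v_\star)$ produce exactly the destabilizing~$+\varsigma$ entry on the diagonal of~$M$, and the recognition that~$a_1$ (rather than~$\det M$, which need not have a definite sign here, nor the full Routh--Hurwitz inequality~$a_2a_1>a_0$, which becomes algebraically heavy in the six-parameter setting) is the convenient coefficient to test.
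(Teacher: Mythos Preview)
Your argument is correct. The linearization and the matrix~$M$ coincide with the paper's matrix~$L$ once the rescaling~$\ell=\varsigma/(|k|^2 v_\star)$ is imposed, and your coefficient~$a_1$ is exactly the paper's~$\widetilde a_1(\nu)$; both approaches then pass to the limit~$(\gamma,|k|)\to(0,0)$, where~$w_\star,v_\star,f_\star\to 0$, and both conclude by continuity. The only genuine difference is in the last step: the paper factors the limiting characteristic polynomial completely as~$Q(\lambda,0)=\lambda(\lambda-\varsigma)(\lambda+\epsilon)$ and invokes the Implicit Function Theorem to track the simple positive root~$\varsigma$ under perturbation, whereas you invoke the elementary criterion that~$a_1<0$ forces at least one eigenvalue into the open right half-plane. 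Your route is slightly more economical (no IFT, no full factorization, no Routh--Hurwitz computation), while the paper's route yields the extra qualitative information that the unstable eigenvalue is real, simple, and close to~$\varsigma$, with the other two eigenvalues near~$0$ and~$-\epsilon$.
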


We emphasize that Theorem~\ref{TR5r3ee} concerns only the stability analysis of the linearized system. A more
comprehensive treatment of the nonlinear system in its full generality remains the subject of future work.

The role of plant competition in the stability of the homogeneous equilibrium
is thus explicit by comparing Theorem~\ref{TUT} with Theorem~\ref{TR5r3ee}. We also stress that, in the absence of reciprocal negative interaction among neighboring plants, the small rainfall regime
corresponds to a stable homogeneous equilibrium~$E_1$
(because~$\Upsilon$ approaches~$\epsilon>0$ for small~$\gamma$): in this sense Theorem~\ref{TR5r3ee} genuinely
detects a change of stability which is specifically induced
by plant competition.

The rest of this paper is organized as follows. Section~\ref{SEC:II}
analyzes the system of ordinary differential equations~\eqref{ODE}
and presents the proof of Theorem~\ref{STAL}.

The analysis of the system of partial differential equations~\eqref{0dojc320yjyouj-3erfg}
is carried out in Sections~\ref{SEC:III} and~\ref{SEC:IIII}, together with the proofs of
Theorems~\ref{TUT} and~\ref{ojdfc30-t7ujsbdnfcvb}.

The system accounting for competition in vegetation is studied in Section~\ref{TR5r3eeS}, where we present
the proof of Theorem~\ref{TR5r3ee}.

Appendix~\ref{APP;a} contains an auxiliary observation regarding the roots of cubic polynomials 
and Appendix~\ref{0apjclrgh} discusses the asymptotics linking
the nonlocal kernel interaction in~\eqref{PROVV2} to the differential operator in~\eqref{NUOVO7}.

\section{Linearization, stability analysis of ordinary differential equations, and proof of Theorem~\ref{STAL}}\label{SEC:II}

The Jacobian matrix of~\eqref{ODE} is
\[
J(f,v,w) =
\begin{bmatrix}
\alpha v - \beta w  & \alpha f & -\beta f \\
- \eta v & \zeta w - \eta f & \zeta v\\
0 & -\delta w & -\delta v - \epsilon
\end{bmatrix}.
\]
Thus,
\[
J (E_0)=
\begin{bmatrix}
- \frac{\beta \gamma}{\epsilon} & 0 & 0 \\
0 & \frac{\gamma\zeta}{\epsilon}&0\\
0 & - \frac{\delta \gamma}{\epsilon}& -\epsilon 
\end{bmatrix}
\]
and
\begin{equation}\label{JE1}
J(E_1) =
\begin{bmatrix}
0 & \alpha f_\star & -\beta f_\star\\
-\eta v_\star & 0 & \zeta v_\star\\
0 & -\delta w_\star & -\delta v_\star - \epsilon
\end{bmatrix}.
\end{equation}

The eigenvalues of~$J(E_0)$ are
$$ -\frac{\beta \gamma}{\epsilon},\qquad
\frac{\gamma\zeta}{\epsilon},\qquad-\epsilon.$$
Since one eigenvalue is positive
(and two are negative)
the equilibrium~$E_0$ is unstable.

Also, the characteristic polynomial of~$J(E_1)$ is
\[P(\lambda):=
\lambda^3 + a_2 \lambda^2 + a_1 \lambda + a_0 ,
\]
where
\begin{equation}\label{0jdocwg-21rtghb:001}
\begin{split}
a_2 &:= \delta v_\star + \epsilon, \\
a_1 &=  \delta\zeta v_\star w_\star + \alpha \eta f_\star v_\star, \\ {\mbox{and }}\quad
a_0 &= \alpha \eta f_\star v_\star (\delta v_\star + \epsilon) + \beta \delta \eta f_\star v_\star w_\star.
\end{split}\end{equation}

We stress that~$a_0>0$, $a_1>0$, and~$a_2>0$. Moreover,
\begin{equation}\label{0jdocwg-21rtghb:002}
\begin{split}&
\frac{a_1 a_2-a_0}{\delta\zeta v_\star w_\star}=
\delta  v_\star  -\frac{\beta \eta f_\star}\zeta   + \epsilon=
\left( \frac{ \delta}{\alpha}  -1\right)\beta w_\star + \epsilon\\&\qquad=
\frac{ \left(\delta-\alpha\right) \left(\sqrt{\alpha^2 \epsilon^2 + 4 \alpha \beta \gamma \delta} - \alpha \epsilon\right) }{2 \alpha\delta} + \epsilon\\&\qquad=
\frac{ 2\beta \gamma \left(\delta-\alpha\right) }{ \sqrt{\alpha^2 \epsilon^2 + 4 \alpha \beta \gamma \delta} + \alpha \epsilon} + \epsilon\\&\qquad=\Upsilon.
\end{split}\end{equation}
{F}rom this and Lemmata~\ref{LE:A}, \ref{LEMMA2}, and~\ref{APK0-2CMAswqpr}, one readily concludes the proof of Theorem~\ref{STAL}.
More specifically, if~$\Upsilon>0$ we apply
Lemma~\ref{LE:A}, obtaining that all eigenvalues have negative real parts.
If instead~$\Upsilon=0$ we apply
Lemma~\ref{LEMMA2}, finding one real, negative eigenvalue
and two purely imaginary eigenvalues.
Finally, if~$\Upsilon<0$, we recall Lemma~\ref{APK0-2CMAswqpr} and we get
one real, negative eigenvalue and two complex eigenvalues with positive real part.

\section{Change of stability and proof of Theorem~\ref{TUT}}\label{SEC:III}

Now we analyze the mechanism for which, in the presence of diffusion, the possibly unstable homogeneous equilibrium~$E_1$ can become stable with respect to spatially nonuniform perturbations
of high frequency.

In order to achieve this goal, we plug~\eqref{0dojc320yjyouj-3erfg2}
into~\eqref{0dojc320yjyouj-3erfg} and disregard quadratic terms in~$\rho$.
With this approximation, 
and using the short notation~$\mu:=|k|^2$,
one thus obtains
%
%
two decoupled and identical
linear systems of ordinary differential equations.
Namely, one can set either~$X:=\big( f_{1,k},v_{1,k},w_{1,k}\big)$ or~$X:=\big( 
f_{2,k},v_{2,k},w_{2,k}\big)$ and write the problem in the compact form~$\dot X= AX$, where
\begin{equation}\label{AMATRIX} A:=\begin{bmatrix}
-c\mu & \alpha f_\star & -\beta f_\star \\
-\eta v_\star &0&\zeta v_\star\\
0&-\delta w_\star &\tiny{\mbox{$-\delta v_\star -\epsilon -d\mu$}}
\end{bmatrix}.\end{equation}

The characteristic polynomial of~$A$ takes the form
\begin{equation}\label{coeffA0987:CPSD} P(\lambda)=\lambda^3+ a_2(\mu)\,\lambda^2+a_1(\mu)\,\lambda+a_0(\mu),\end{equation}
with
\begin{equation}\label{coeffA0987}\begin{split}
a_2(\mu)&:= (c+d)\mu + \delta v_\star + \epsilon , \\
a_1(\mu) &:=c\mu(d\mu+\delta v_\star + \epsilon ) + \alpha\eta f_\star v_\star + \delta\zeta v_\star  w_\star, \\
{\mbox{and }}\qquad
a_0 (\mu)&:=c\mu \delta\zeta v_\star w_\star + \alpha \eta f_\star v_\star (\delta v_\star + \epsilon + d\mu) + \beta\delta \eta
f_\star  v_\star w_\star.
\end{split}\end{equation}
We stress that
\begin{equation}\label{coeffA0987:pos}
a_2(\mu),\;a_1(\mu),\;a_0(\mu)\in(0,+\infty).
\end{equation}

We also observe that, in the absence of diffusion,
i.e.~$c:=d:=0$, the matrix~$A$ reduces to the matrix in~\eqref{JE1}, and the same holds true when~$\mu:=0$.
As a result, recalling~\eqref{0jdocwg-21rtghb:002},
\begin{equation*}
a_1(0)\, a_2(0)-a_0(0)=\delta\zeta v_\star w_\star\Upsilon.\end{equation*}

More generally,
\begin{equation}\label{bcoff6479832iPRE}
a_1(\mu)\, a_2(\mu) - a_0(\mu)= b_3\,\mu^3
+ b_2\,\mu^2 + b_1\,\mu +b_0,
\end{equation}
where
\begin{equation}\label{bcoff6479832i}\begin{split}&
b_3:=cd(c+d),\\&
b_2:=c(c+2d)(\delta v_\star + \epsilon),\\& 
b_1:=c(\delta v_\star + \epsilon)^2 + d\delta\zeta v_\star w_\star + c\alpha\eta f_\star v_\star ,\\
{\mbox{and }}\qquad&
b_0:=\delta v_\star w_\star
\big( \delta\zeta v_\star + \zeta\epsilon - \beta\eta f_\star \big)=
\delta\zeta v_\star w_\star\Upsilon.\end{split}
\end{equation}
We remark that
\begin{equation}\label{coeffA0987:pos12}
b_3,\;b_2,\;b_1\in(0,+\infty).
\end{equation}
We also recall that, by~\eqref{coeffA0987:pos} and Lemma~\ref{LE:A}, stability holds true anytime~$a_1(\mu)\, a_2(\mu) - a_0(\mu)>0$. This and~\eqref{coeffA0987:pos12} yield stability
whenever~$\Upsilon\ge0$ (hence, in this case, diffusion does not disrupt stability).

Furthermore, by~\eqref{bcoff6479832iPRE} and~\eqref{coeffA0987:pos12},
$$\lim_{\mu\to+\infty}a_1(\mu)\, a_2(\mu) - a_0(\mu)=+\infty,$$
from which we conclude that~$a_1(\mu)\, a_2(\mu) - a_0(\mu)>0$ as long as~$\mu$ is large enough,
thus concluding the proof of
Theorem~\ref{TUT}.

\section{Periodic oscillations, traveling waves, and proof of Theorem~\ref{ojdfc30-t7ujsbdnfcvb}}\label{SEC:IIII}

One plugs~\eqref{odjfcvfobi0efg-ortnghoP} into~\eqref{0dojc320yjyouj-3erfg} finding that,
up to quadratic orders in~$\rho$ and using the short notation~$y:=k\cdot x+\sigma t\in\R$,
\begin{equation}\label{sq-dpcjvrtwpho-ytDj} \begin{dcases} \sigma
F'(y)=f_\star\big(\alpha V(y)-\beta W(y)\big)+c|k|^2 F''(y),\\ \sigma
V'(y)=v_\star\big(\zeta W(y)-\eta F(y)\big),\\ \sigma
W'(y)=-\delta v_\star W(y)-\delta w_\star V(y)-\epsilon W(y)+d|k|^2 W''(y).
\end{dcases}
\end{equation}
We look for solutions in the form
$$F(y)=F_0 e^{iy},\qquad
V(y)=V_0 e^{iy},\qquad W(y)=W_0 e^{iy},$$
with~$(F_0, V_0, W_0)\in\C^3\setminus\{0\}$ to be determined.

In this way, and setting~$\mu:=|k|^2$, we rewrite~\eqref{sq-dpcjvrtwpho-ytDj} in the form
\begin{equation}\label{sq-dpcjvrtwpho-ytDj.EQ} \begin{dcases} 
i\sigma F_0=f_\star\big(\alpha V_0-\beta W_0\big)-c\mu F_0,\\
i\sigma V_0=v_\star\big(\zeta W_0-\eta F_0\big),\\
i\sigma W_0=-\delta v_\star W_0-\delta w_\star V_0-\epsilon W_0-d\mu W_0.
\end{dcases}
\end{equation}
Hence, with the vector notation~$ X:=(F_0,V_0,W_0)$, we obtain the compact equation
\begin{equation}\label{a0dcjorgh0typj}
AX=i\sigma X,\end{equation} with~$A$ as in~\eqref{AMATRIX}.

\begin{figure}[h]
    \centering
    \includegraphics[height=5.9cm]{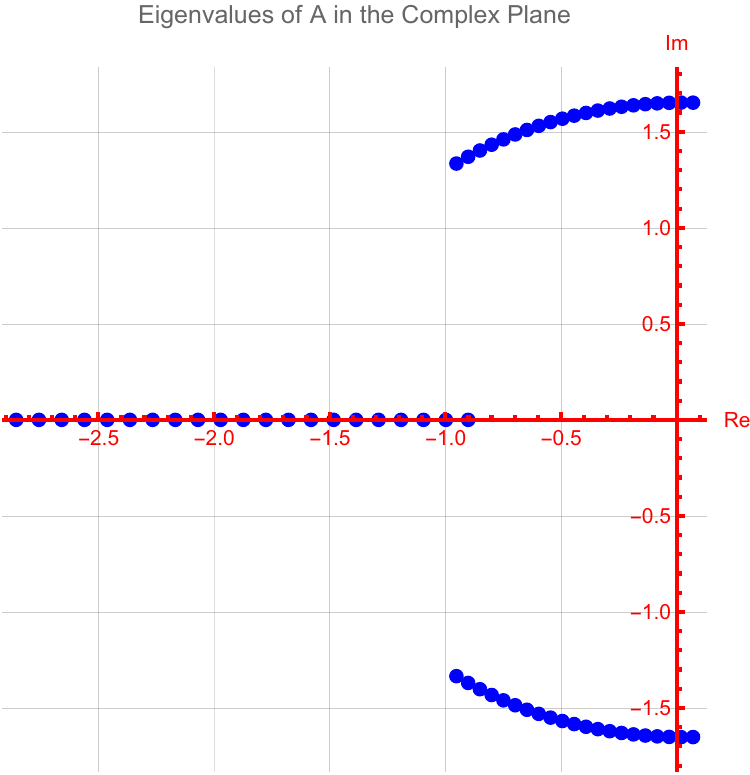}
    \caption{\footnotesize\sl Displacement of the eigenvalues of the matrix~$A$ in~\eqref{AMATRIX}
when~$\alpha :=2$, $ \beta := \gamma := \delta :=  \eta := \zeta := 1$, $\epsilon :=\frac1{10}$, and~$c|k|^2=d|k|^2\in[0,2]$.
The periodic solutions for the linearized system in Theorem~\ref{ojdfc30-t7ujsbdnfcvb} arise when a pair of eigenvalues crosses the imaginary axis from right to left as~$\mu$ increases. Note that the third eigenvalue remains real and negative.}
    \label{FIG1qwefgb00}
\end{figure}

The characteristic polynomial of~$A$ is
$$ P(\lambda)=\lambda^3+ a_2(\mu)\,\lambda^2+a_1(\mu)\,\lambda+a_0(\mu),$$
with coefficients~$a_2(\mu)$, $a_1(\mu)$ and~$a_0(\mu)$ as in~\eqref{coeffA0987}.

We stress that~$a_1(\mu)>0$. Furthermore, equation~\eqref{a0dcjorgh0typj} is solvable if and only if~$P$
admits a purely imaginary root.
By Lemma~\ref{LEMMA2}, we know that this is equivalent to $$a_1(\mu)\, a_2(\mu)-a_0(\mu)=0.$$ Namely, recalling~\eqref{bcoff6479832iPRE},
\begin{equation}\label{0jdocwg-21rtghb:003}
\Phi(\mu):=b_3\,\mu^3+ b_2\,\mu^2 + b_1\,\mu +b_0=0,
\end{equation}
where the coefficients~$b_0$, $b_1$, $b_2$ and~$b_3$ are given in~\eqref{bcoff6479832i}.

We also know by Lemma~\ref{LEMMA2} that if these conditions are met,
the roots of~$P$ are~$-a_2(\mu)$, $i\sqrt{a_1(\mu)}$, and~$-i\sqrt{a_1(\mu)}$,
hence~$\sigma =\sqrt{a_1(\mu)}$.

We point out that, when~$\Upsilon<0$,
$$\Phi(0)=b_0=\delta\zeta v_\star w_\star\Upsilon<0.$$
Also, since~$b_3>0$,
$$\lim_{\mu\to+\infty}\Phi(\mu)=+\infty$$
and so, by continuity, we find~$\mu_\star$ such that~$\Phi(\mu_\star)=0$,
which fulfills~\eqref{0jdocwg-21rtghb:003}, as advertised
(and notice that the choice~$\mu:=\mu_\star$ entails a choice~$\sigma:=\sigma_\star$).

The eigenvector~$X_\star$ of~$A$ corresponding to~$i\sigma_\star$ provides the desired solution of~\eqref{a0dcjorgh0typj}.
Also, $a_2(\mu_\star)>0$, yielding the stability of the equilibrium (recall that
the corresponding eigenvalues are~$-a_2(\mu_\star)$, $i\sqrt{a_1(\mu_\star)}$, and~$-i\sqrt{a_1(\mu_\star)}$,
due to Lemma~\ref{LEMMA2}).

In further detail, a (normalized) eigenvector~$X_\star=(F_\star,V_\star,W_\star)$ provides a complex solution of~\eqref{sq-dpcjvrtwpho-ytDj.EQ}
and therefore of~\eqref{sq-dpcjvrtwpho-ytDj}. Since the system in~\eqref{sq-dpcjvrtwpho-ytDj} is linear and with
real coefficients, the real (as well as the imaginary) part of this solution gives a real solution, which can thus be written as
$$\begin{dcases}&F(y)=\Re\big(F_\star e^{iy}\big)=\Re F_\star \cos y-\Im F_\star\sin y,\\ &
V(y)=\Re\big(V_\star e^{iy}\big)=\Re V_\star \cos y-\Im V_\star\sin y,\\& W(y)=\Re\big(W_\star e^{iy}\big)=\Re W_\star \cos y-\Im W_\star\sin y.\end{dcases}$$

In particular, for all~$y\in\R$ the vector~$(F(y),V(y),W(y))$ is spanned by the vectors
$$( \Re F_\star,\Re V_\star,\Re W_\star)\qquad{\mbox{and}}\qquad( \Im F_,\Im V_\star,\Im W_\star).$$

We now check the attractivity of the perturbed dynamic. To this end, we consider
solutions of the linearized system in the form~$Y(x,t):=\theta(t)\,e^{ik\cdot x}$ for suitable~$\theta:\R\to\C^3$. That is, $Y$ solves~$\partial_t Y=MY+\Lambda\Delta Y$, where
$$M:=\begin{bmatrix}
0 & \alpha f_\star & -\beta f_\star\\
-\eta v_\star &0&\zeta v_\star\\
0&-\delta w_\star &-\delta v_\star -\epsilon
\end{bmatrix}\qquad{\mbox{and}}\qquad
\Lambda:=\begin{bmatrix}c&0&0\\0&0&0\\0&0&d\end{bmatrix}.$$
Hence, separating sines and cosines, and noticing that~$M-|k|^2\Lambda=A$,
we find that~$\dot \theta=A\theta$
and consequently\begin{equation}\label{0ojd4390iy0AScdni}\theta(t)=e^{At} \theta(0).\end{equation}

Now we remark that the complex conjugate~$\overline{X_\star}$ of~$X_\star$ is an eigenvector for~$A$ corresponding to the eigenvalue~$-i\sigma$.
We can thus consider~$\C^3$ as the span of~$\{ X_\star, \overline{X_\star}, \omega_\star\}$, where~$\omega_\star$
is a (normalized) eigenvector with eigenvalue~$-a_2(\mu_\star)\in(-\infty,0)$.
Hence, we write~$\theta(0)=
\theta_1 X_\star+\theta_2 \overline{X_\star}+\theta_3 \omega_\star$, for some~$\theta_1$, $\theta_2$, $\theta_3\in\C$.

This allows one to reformulate~\eqref{0ojd4390iy0AScdni} as follows:
$$ \theta(t)=\theta_1\,e^{i\sigma t}X_\star+\theta_2\,e^{-i\sigma t}\overline{X_\star}+\theta_3\,e^{-a_2 t}\omega_\star
.$$
On this account, $$Y(x,t)=\theta_1\,e^{i(\sigma t+k\cdot x)}X_\star+\theta_2\,e^{-i(\sigma t-k\cdot x)} \overline{X_\star}+\theta_3\,e^{-a_2 t+ik\cdot x}\omega_\star,$$
showing the exponential decay in time towards pure traveling trains.

This completes the proof of Theorem~\ref{ojdfc30-t7ujsbdnfcvb}
(in that statement, the suffix~$\star$ has been dropped for notational consistence
with the rest of the Introduction).

\section{Competition for resource in vegetation and proof of Theorem~\ref{TR5r3ee}}\label{TR5r3eeS}

We observe that the presence of the competition term in~\eqref{NUOVO} modifies the vegetation components 
into
\begin{equation*}
\begin{dcases}
\dot v_{1,k}=v_\star(\zeta w_{1,k}-\eta f_{1,k} )+\ell |k|^2 v_\star v_{1,k},\\
\dot v_{2,k}=v_\star(\zeta w_{2,k}-\eta f_{2,k})+\ell |k|^2 v_\star  v_{2,k} .
\end{dcases}
\end{equation*}
As a result, using the notation~$\mu:=|k|^2$ and
recalling~\eqref{AMATRIX}, the problem can be cast into
the compact form~$\dot X= LX$, where
\begin{equation}\label{pkdweLL} L:=
A+\ell\mu v_\star
\begin{bmatrix}
0&0&0 \\0&1&0\\
0&0&0
\end{bmatrix}
=
\begin{bmatrix}
-c\mu & \alpha f_\star & -\beta f_\star \\
-\eta v_\star &\ell\mu v_\star&\zeta v_\star\\
0&-\delta w_\star &\tiny{\mbox{$-\delta v_\star -\epsilon -d\mu$}}
\end{bmatrix}.\end{equation}
In a nutshell, the matrix~$L$ is obtained from~$A$ by adding, say, a column (the second one)
and ditto for the matrix~$\lambda{\rm I}-L$ with respect to the matrix~$\lambda{\rm I}-A$.
We can thus use the fact that the determinant is ``multilinear'' (i.e., it is a linear function in each column of the input matrix) and find that
the characteristic polynomial of~$L$ has the form
\begin{equation}\label{qi0dwfuojvrbthwq8nc7rvtb9mA}\begin{split}&
Q(\lambda):=\det(\lambda{\rm I}-L)=
\det(\lambda{\rm I}-A)-\det
\begin{bmatrix}
\lambda+c\mu & 0 & \beta f_\star \\
\eta v_\star &\ell\mu v_\star&-\zeta v_\star\\
0&0 &\tiny{\mbox{$\lambda+\delta v_\star +\epsilon +d\mu$}}
\end{bmatrix}\\&\qquad\qquad=P(\lambda)-\ell\mu v_\star \det
\begin{bmatrix}
\lambda+c\mu & \beta f_\star \\
0 &\tiny{\mbox{$\lambda+\delta v_\star +\epsilon +d\mu$}}
\end{bmatrix}\\&\qquad\qquad=P(\lambda)-\ell\mu v_\star\Big(
\lambda^2 + \big( \delta v_\star + \epsilon + (c + d) \mu\big) \lambda
+c \mu ( \delta v_\star+ \epsilon + d \mu) \Big),\end{split}\end{equation}
where~$P$ is the characteristic polynomial of~$A$, as computed in~\eqref{coeffA0987:CPSD}
and~\eqref{coeffA0987}.

The identity in~\eqref{qi0dwfuojvrbthwq8nc7rvtb9mA} shows that~$Q$ differs from~$P$ in the constant, linear, and quadratic terms with
respect to~$\lambda$. On this account, in the setting of~\eqref{coeffA0987:CPSD}
and~\eqref{coeffA0987}, we write
\begin{equation}\label{i02ro48bmn87X0}
Q(\lambda)=\lambda^3+ \widehat{a}_2(\mu)\,\lambda^2+\widehat{a}_1(\mu)\,\lambda+\widehat{a}_0(\mu),\end{equation}
with
\begin{equation}\label{i02ro48bmn87X02}\begin{split}
&\widehat{a}_2(\mu):=a_2(\mu)-\ell\mu v_\star,\\
&\widehat{a}_1(\mu):=a_1(\mu)-\ell\mu v_\star \big( \delta v_\star + \epsilon + (c + d) \mu\big),\\
{\mbox{and }}\quad& \widehat{a}_0(\mu):=a_0(\mu)
-c\ell\mu^2 v_\star( \delta v_\star+ \epsilon + d \mu) .
\end{split}\end{equation}

In fact, we choose~$\ell:=\frac{\varsigma}{\mu v_\star}$, with~$\varsigma\in(0,\epsilon)$, and
we make the notation more explicit by writing~$\nu:=(\gamma,\mu)$, since we are interested
in bifurcating our parameters from~$\nu=0$. With this more precise notation, we rewrite~\eqref{i02ro48bmn87X0}
and~\eqref{i02ro48bmn87X02} as
\begin{equation}\label{i-dpbXJm0nXZ01}
Q(\lambda,\nu)=\lambda^3+ \widetilde{a}_2(\nu)\,\lambda^2+\widetilde{a}_1(\nu)\,\lambda+\widetilde{a}_0(\nu),\end{equation}
with
\begin{equation}\label{i-dpbXJm0nXZ0}\begin{split}
&\widetilde{a}_2(\nu):=a_2(\mu)-\varsigma,\\
&\widetilde{a}_1(\nu):=a_1(\mu)-\varsigma \big( \delta v_\star + \epsilon + (c + d) \mu\big),\\
{\mbox{and }}\quad& \widetilde{a}_0(\nu):=a_0(\mu)
-c\mu\varsigma( \delta v_\star+ \epsilon + d \mu) .
\end{split}\end{equation}

We stress that, in light of~\eqref{coeffA0987}, the coefficients~$a_2(\mu)$, $a_1(\mu)$, and~$a_1(\mu)$
also depend on~$\gamma$, but this dependence
only occurs via~$f_\star$, $v_\star$, and~$w_\star$ (which in turn do not depend on~$\mu$).

Moreover, in light of~\eqref{EQSTvdARDAF23-4},
$$ \lim_{\nu\to(0,0)}w_\star=
\lim_{\gamma\searrow0}w_\star=0$$
and therefore 
$$  \lim_{\nu\to(0,0)}f_\star=0\qquad{\mbox{and}}\qquad  \lim_{\nu\to(0,0)}v_\star=0.$$
For this reason, recalling~\eqref{coeffA0987}, we see that
\begin{equation*}\begin{split} \lim_{\nu\to(0,0)} a_2(\mu)= \epsilon
\qquad{\mbox{and}}\qquad \lim_{\nu\to(0,0)}a_1(\mu) =0= \lim_{\nu\to(0,0)}
a_0 (\mu).
\end{split}\end{equation*}

{F}rom this and~\eqref{i-dpbXJm0nXZ0} we arrive at
$$\widetilde{a}_2(0)=\epsilon-\varsigma,\qquad
\widetilde{a}_1(0)=-\varsigma \epsilon ,\qquad{\mbox{and }}\qquad
\widetilde{a}_0(0)=0.$$
Hence, by~\eqref{i-dpbXJm0nXZ01},
$$Q(\lambda,0)=\lambda^3+ (\epsilon-\varsigma)\lambda^2-\varsigma\epsilon\lambda=\lambda(\lambda-\varsigma)(\lambda+\epsilon).$$
As a result, we see that~$\varsigma$ is a positive, simple root of~$Q(\cdot,0)$ and therefore, by the Implicit Function Theorem,
for~$\nu$ sufficiently small there exists~$\lambda(\nu)$, continuously depending on~$\nu$, such that~$\lambda(0)=\varsigma$
and~$Q(\lambda(\nu),\nu)=0$. In this way, for small~$\nu$, the positive root~$\lambda(\nu)$
provides the desired instability and
the proof of Theorem~\ref{TR5r3ee}
is thereby completed.

See Figure~\ref{Fe2IG1qwefgb0215rnb78X.90} for a visualization
of the instability induced by plant competition

\begin{figure}[h]
    \centering
    \includegraphics[height=5.9cm]{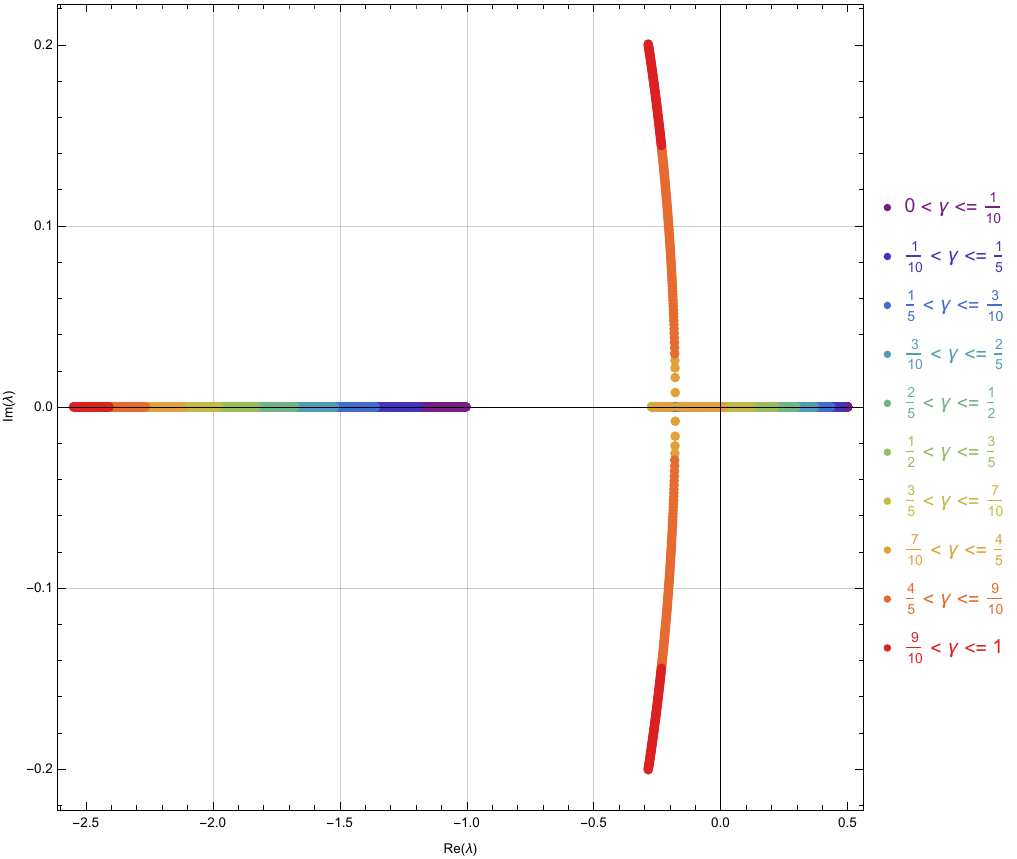}
    \caption{\footnotesize\sl Displacement of the eigenvalues of the matrix~$L$ in~\eqref{pkdweLL}
    when~$\alpha := \beta := \delta := \epsilon := \eta := \zeta := 1$,
    $\mu:=\gamma$, $\varsigma:=\epsilon/2$, $\ell:=\frac{\varsigma}{\mu v_\star}$. The complex eigenvalues are displayed as the parameter $\gamma$ varies over $[0,1]$. Colors follow a rainbow scheme: red for high $\gamma$ and violet for low $\gamma$
    (in the situation in which we are reducing~$\gamma$, the image
    must be interpreted ``from red to violet'').
    The instability established in Theorem~\ref{TR5r3ee}
    arises here when the larger real eigenvalues crosses the origin from left to right.}
    \label{Fe2IG1qwefgb0215rnb78X.90}
\end{figure}

\begin{appendix}

\section{Cubic polynomials and a useful algebraic observations}\label{APP;a}

\begin{lemma}\label{LE:A}
Let~$a_0$, $a_1$, $a_2\in(0,+\infty)$ and consider the polynomial $$P(t) = t^3 + a_2 t^2 + a_1 t + a_0.$$ Then, all the roots of~$P$ have negative real part if and only if $a_1 a_2 > a_0 $.
\end{lemma}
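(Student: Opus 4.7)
The plan is to factor the cubic through one of its real roots (which must exist, since $P$ has odd degree and real coefficients) by writing
\[ P(t) = (t-r)\,(t^2+pt+q) \]
for some $p,q\in\R$. Vieta's formulas applied to this factorization give
\[ a_2 = p - r, \qquad a_1 = q - pr, \qquad a_0 = -qr, \]
and a direct expansion yields the key algebraic identity
\[ a_1 a_2 - a_0 \;=\; p\,(r^2 - pr + q), \]
around which the whole argument will revolve.

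Next I will use the fact that the quadratic factor $t^2+pt+q$ has both roots with negative real part if and only if $p>0$ and $q>0$ (the two-dimensional Routh--Hurwitz criterion, an immediate check via the quadratic formula in both the real and complex sub-cases). Consequently $P$ is Hurwitz-stable precisely when $r<0$, $p>0$, and $q>0$. The forward implication of the lemma is then immediate: these three inequalities make $a_0=-qr$, $a_2=p-r$, and $a_1=q-pr$ visibly positive, and each of the summands $r^2$, $-pr$, $q$ in the second factor of the identity is positive, so $a_1 a_2 - a_0 > 0$.

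The substance lies in the converse. Assuming $a_0,a_1,a_2>0$ and $a_1 a_2 > a_0$, I need to recover $r<0$, $q>0$, and $p>0$. Positivity of $a_0=-qr$ forces $q$ and $r$ to have opposite signs; the configuration $q<0$, $r>0$ can be ruled out because then $a_2=p-r>0$ would give $p>r>0$, so $a_1=q-pr<0$, contradicting $a_1>0$. This pins down $r<0$ and $q>0$. The step I expect to be the main obstacle is the exclusion of $p\le 0$, since a priori the identity $a_1 a_2 - a_0 = p\,(r^2-pr+q)$ could be realized with both factors negative. The way to close this gap is to set $s:=-r>0$ and observe that $g(p):= r^2-pr+q = s^2+ps+q$ is linear in $p$ with positive slope $s$, vanishing only at $p=-s-q/s$. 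The inequalities $a_2>0$ and $a_1>0$ translate precisely into $p>-s$ and $p>-q/s$, both of which strictly exceed $-s-q/s$; hence $g(p)>0$ throughout the admissible range, and the hypothesis $a_1 a_2 - a_0 = p\,g(p)>0$ forces $p>0$, completing the proof.
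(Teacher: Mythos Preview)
Your proof is correct. The core is the same algebraic identity as in the paper---the paper writes it as $a_0-a_1a_2=(r_1+r_2)(r_1+r_3)(r_2+r_3)$ in terms of the three (possibly complex) roots, and in your real parametrization $r_1=r$, $r_2+r_3=-p$, $r_2r_3=q$ this becomes exactly your $a_1a_2-a_0=p\,(r^2-pr+q)$. The executions diverge from there: the paper first observes that $P(t)\ge a_0>0$ for $t\ge 0$, so every real root is automatically negative, and then splits into the cases ``three real roots'' versus ``one real root and a complex-conjugate pair'', handling each directly from the product formula. You instead stay entirely over~$\R$ by grouping two roots into the quadratic factor, which lets you treat both cases at once; the price is the extra step of ruling out $p\le 0$ via the linear-in-$p$ analysis of $g(p)=s^2+ps+q$, where the constraints $a_2>0$ and $a_1>0$ force $p>-s-q/s$ and hence $g(p)>0$. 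Both arguments are short; yours trades the paper's clean ``$P>0$ on $[0,\infty)$'' shortcut for a uniform treatment without a real/complex dichotomy.
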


\begin{proof}
This is in fact a particular case of the Routh-Hurwitz Stability Criterion, but we provide
here a self-contained proof for the facility of the reader.

To prove Lemma~\ref{LE:A}, we consider the complex roots~$r_1$, $r_2$, and~$r_3$ of~$P$
and the corresponding factorization
$$P(t)=(t-r_1)(t-r_2)(t-r_3),$$
leading to
\begin{equation*}
\begin{split}&
a_2=-(r_1+  r_2 + r_3) ,\\&
a_1=r_1 r_2 + r_1 r_3 + r_2 r_3 , \\ {\mbox{and }}\quad&a_0=-r_1 r_2 r_3 .
\end{split}\end{equation*}

As a result,
\begin{equation}\label{mag}
a_0-a_1a_2=(r_1 + r_2) (r_1 + r_3) (r_2 + r_3).
\end{equation}

We also remark that~$P(+\infty)=+\infty$ and~$P(-\infty)=-\infty$, therefore
\begin{equation}\label{ser.pqkdoewvnb9m87}
{\mbox{$P$ has at least one real root.}}\end{equation} For this reason, without loss of generality, we can suppose that~$r_1$ is real.

Also, since~$P(t)\ge a_0>0$ for all~$t\ge0$, we see that
\begin{equation}\label{ser}
{\mbox{all real roots must be negative}}\end{equation} and, in particular, $r_1<0$.

Now, suppose that all the roots are real and negative,
namely~$r_1$, $r_2$, $r_3\in(-\infty,0)$.
Then, it holds that~$r_i+r_j<0$ for all~$i$, $j\in\{1,2,3\}$ and thus
we infer from~\eqref{mag} that
\begin{equation}\label{Swq90nc79m0bv37}
a_0-a_1a_2<0.\end{equation}

Let us now suppose that~$P$ has complex roots.
In this case, the roots must be conjugated, since the coefficients of~$P$ are real,
hence, in this case, one can write~$r_2=x+iy$ and~$r_3=x-iy$, with~$x$, $y\in\R$,
and we obtain from~\eqref{mag} that, as long as~$x\ne0$,
\begin{equation}\label{ser2} \frac{a_0-a_1a_2}{2x}=(r_1 + x+iy) (r_1 + x-iy)=
2 r_1 x + r_1^2 + x^2 + y^2.\end{equation}

Hence, if~$P$ possesses complex roots with negative real part, we deduce from~\eqref{ser2} that
$$ \frac{a_0-a_1a_2}{2x}\ge 2 r_1 x>0$$
and therefore~$a_0-a_1a_2<0$.

These observations show that if~$P$ has complex
roots with negative real part, then necessarily~$a_1 a_2 > a_0 $.

Suppose now that~$a_1 a_2 > a_0 $. Our goal is to show that all the roots have negative
real part. The claim is true if all the roots are real, due to~\eqref{ser}, therefore
we can suppose that~$P$ has complex roots. In this situation, by~\eqref{ser2},
$$\frac{a_0-a_1a_2}{2x}=
(r_1 + x)^2 + y^2\ge0
$$and therefore~$x<0$, as desired.

The proof of Lemma~\ref{LE:A} is thereby complete.
\end{proof}

We also recall the following simple observation:

\begin{lemma}\label{LEMMA2}
Let~$a_0$, $a_1$, $a_2\in\R$ and consider the polynomial $$P(t) = t^3 + a_2 t^2 + a_1 t + a_0.$$ Then, there exists a nonzero
purely imaginary root of~$P$ if and only if $a_1>0$ and~$a_1 a_2 = a_0 $.

Also, in this case, the roots of~$P$ are~$-a_2$,  $i\sqrt{a_1}$, and~$-i\sqrt{a_1}$.
\end{lemma}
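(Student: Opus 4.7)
The plan is to prove Lemma~\ref{LEMMA2} by direct substitution: plug a purely imaginary candidate~$t=i\beta$ with~$\beta\in\R\setminus\{0\}$ into~$P$, split into real and imaginary parts, and read off the equivalent conditions on the coefficients.

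First I would expand
$$P(i\beta)=(i\beta)^3+a_2(i\beta)^2+a_1(i\beta)+a_0=(a_0-a_2\beta^2)+i\beta(a_1-\beta^2),$$
so that~$P(i\beta)=0$ with~$\beta\neq 0$ is equivalent to the two scalar conditions~$\beta^2=a_1$ and~$a_0=a_2\beta^2$. The first forces~$a_1>0$ (and identifies~$\beta=\pm\sqrt{a_1}$), and substituting into the second yields~$a_0=a_1a_2$. This gives the ``only if'' direction. For the ``if'' direction, assuming~$a_1>0$ and~$a_1a_2=a_0$, a direct verification with~$\beta:=\sqrt{a_1}$ shows~$P(i\beta)=0$.

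To finish, I would locate the third root. Since~$\pm i\sqrt{a_1}$ are both roots of~$P$, the quadratic~$t^2+a_1$ divides~$P(t)$; performing the division (which reduces to computing~$P(t)-(t+a_2)(t^2+a_1)=a_0-a_1a_2=0$ under our hypothesis) yields the factorization
$$P(t)=(t^2+a_1)(t+a_2),$$
so the third root is~$-a_2$, completing the identification of all the roots.

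I do not expect any genuine obstacle here: the lemma is a short algebraic fact, and the only care required is to observe that the imaginary-part equation~$\beta(a_1-\beta^2)=0$ forces~$\beta^2=a_1$ precisely because we insist on~$\beta\neq 0$, which is what pins down the sign condition~$a_1>0$ and makes the statement sharp.
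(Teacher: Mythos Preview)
Your proof is correct and follows essentially the same elementary approach as the paper: both arguments compute what the existence of a nonzero purely imaginary root forces on the coefficients and then exhibit the factorization~$P(t)=(t^2+a_1)(t+a_2)$. The only cosmetic difference is that the paper invokes the conjugate-root theorem and compares coefficients via Vieta, while you substitute~$t=i\beta$ directly and separate real and imaginary parts; the content is the same.
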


\begin{proof}
To prove this, on the one hand one observes that if~$a_1 a_2 = a_0 $ then
$$P(t) = t^3 + a_2 t^2 + a_1 t + a_1a_2=
(t^2+a_1)(t+a_2).$$
As a result, when~$a_1>0$, we get that~$P$ has purely imaginary roots~$\pm i\sqrt{a_1}$.

On the other hand, if~$P$ has a nontrivial purely imaginary root~$ir$, with~$r\in\R\setminus\{0\}$,
since the coefficients of~$P$ are real we know that~$-ir$ is a root as well,
and the other root of~$P$, say~$-c$, must be necessarily real.
As a result,
$$P(t)=(t-ir)(t+ir)(t+c)=
t^3 + c t^2 +r^2 t + c r^2 $$
and thus
$$ a_2=c,\qquad a_1=r^2>0,\qquad{\mbox{and}}\qquad a_0=cr^2,$$
yielding that~$a_1 a_2 = cr^2=a_0$.
\end{proof}

Another useful variation of the previous result is presented here below:

\begin{lemma}\label{APK0-2CMAswqpr}
Let~$a_0$, $a_1$, $a_2\in(0,+\infty)$ and consider the polynomial $$P(t) = t^3 + a_2 t^2 + a_1 t + a_0.$$ 
Suppose that~$a_1 a_2 < a_0 $.
Then, $P$ possesses one real, negative root and two complex roots with positive real part.
\end{lemma}

\begin{proof} In light of~\eqref{ser.pqkdoewvnb9m87} we know that~$P$ has at least one real root.
Moreover, by~\eqref{ser} we have that this real root must be negative.

If~$P$ had three real roots, then~\eqref{ser} would also say that all these real roots must be negative, and therefore~\eqref{Swq90nc79m0bv37} would be in force, thus giving a contradiction with our assumption.

Hence, $P$ presents one real, negative root and two complex roots.

The real part of these roots cannot be zero, due to Lemma~\ref{LEMMA2},
and cannot be negative, due to Lemma~\ref{LE:A}, hence it is necessarily positive.
\end{proof}

\section{{F}rom~\eqref{PROVV2} to~\eqref{NUOVO7}}\label{0apjclrgh}

Here we show how to obtain the differential operator in~\eqref{NUOVO7} as a proxy
for the kernel interaction in~\eqref{PROVV2}. For this purpose, we recall Pizzetti's Formula
(see e.g.~\cite[equation~(1.1.16)]{DV24BK}), according to which, for every smooth function~$f$ and~$N\in\N$,
$$\int_{\partial B_\rho(p)} f(\varphi)\,d\Sigma_\varphi=\sum_{j=0}^N C_{n,j} \,\rho^{n-1+2j}\,\Delta^j f(p)+O(\rho^{n+2N}).$$
Here above, ``$d\Sigma$'' stands for the hypersurface measure on the sphere~$\partial B_\rho(p)$ and~$C_{n,j}$
are positive constants (that can be made explicit). 

We also use the radial notation~$K(y)=K_0(|y|)$ for some function~$K_0:[0,+\infty)\to[0,+\infty)$
and use polar coordinates to conclude that
\begin{eqnarray*}
&& \int_{\R^n} K(y)\,v(x+y)\,dy=\int_0^{+\infty}\left[
\int_{\partial B_\rho} K_0(\rho)\,v(x+\rho\varphi)\,d\Sigma_\varphi\right]\,d\rho\\&&\qquad=\int_0^{+\infty}\left[ 
\sum_{j=0}^N C_{n,j} \rho^{n-1+2j}\,\Delta^j v(x)+O(\rho^{n+2N})
\right]\,K_0(\rho)\,d\rho.
\end{eqnarray*}The expression in~\eqref{NUOVO7} now
follows by assuming that~$K_0$ decays sufficiently rapidly, interchanging the integral and summation, and formally neglecting higher-order terms.

\end{appendix}

\end{document}